\newcommand{\on}[1]{\operatorname{#1}}
\newcommand{\mathfont}{\mathbf}
\newcommand{\ZZ}{\mathfont Z}
\newcommand{\QQ}{\mathfont Q}
\newcommand{\g}{\mathfrak{g}}
\newcommand{\h}{\mathfrak{h}}
\newcommand{\m} {\mathfrak m}
\DeclareFontFamily{OT1}{rsfs}{}
\DeclareFontShape{OT1}{rsfs}{n}{it}{<-> rsfs10}{}
\DeclareMathAlphabet{\mathscr}{OT1}{rsfs}{n}{it}
\newcommand{\Hcal}{\mathcal{H}}
\newcommand{\Ccal}{\mathcal{C}}
\newcommand{\Id}{\on{Id}}
\newcommand{\Hom}{\on{Hom}}
\newcommand \tensor[1] {\otimes_{#1}}
\renewcommand{\Im}{\on{Im}}
\newcommand{\Aut}{\on{Aut}}
\newcommand{\Gal}{\on{Gal}}
\newcommand{\End}{\on{End}}
\newcommand{\onto}{\twoheadrightarrow}
\newcommand{\Rep}{\on{Rep}}
\renewcommand{\O}{\mathcal{O}}
\newcommand{\rhobar}{\overline{\rho}}
\newcommand{\Spec}{\on{Spec}}
\newcommand{\Lie}{\on{Lie}}
\newcommand{\GL}{\on{GL}}
\renewcommand{\sl}{\mathfrak{sl}}
\newcommand{\Ad}{\on{Ad}}
\newcommand{\ad}{\on{ad}}
\newcommand{\Ga}{\mathfont{G}_a}
\newcommand{\Gm}{\mathfont{G}_m}
\theoremstyle{plain}
\newtheorem{lem}{Lemma}
\newtheorem{thm}[lem]{Theorem}
\newtheorem{prop}[lem]{Proposition}
\theoremstyle{definition}
\newtheorem{defn}[lem]{Definition}
\newtheorem{remark}[lem]{Remark}
\newcommand{\WD}{WD_G}
\newcommand{\WDL}{WD_{G, L}}
\newcommand{\lw}{l.w.}
\title{$G$-Valued Galois Deformation Rings when $\ell \neq p$}
\author{Jeremy Booher and Stefan Patrikis}
\date{\today}
\thanks{We are grateful to Rebecca Bellovin and Toby Gee for encouraging us to write up our work. We also thank Brian Conrad and Bhargav Bhatt for commutative algebra consultations, and the referee for helpful comments. S.P. was partially supported by NSF Grant DMS-1700759.}
\begin{document} 

 \begin{abstract}
For a smooth group scheme $G$ over an extension of $\ZZ_p$ such that the generic fiber of $G$ is reductive, we study the generic fiber of the Galois deformation ring for a $G$-valued mod $p$ representation of the absolute Galois group of a finite extension of $\QQ_\ell$ with $\ell \neq p$.  In particular, we show it admits a regular dense open locus, and that it is equidimensional of dimension $\dim G$.
 \end{abstract}
 
 \maketitle
 
\section{Introduction}

Many of the deepest arithmetic properties of modular forms (or more generally algebraic automorphic representations) are encoded in the congruences between eigenforms of different level or weight. These congruences can be interpreted as congruences between the corresponding modular Galois representations, and this perspective, in combination with the development of modularity lifting and potential automorphy theorems, has dramatically advanced our understanding of such congruences. The most successful approach to producing congruences is based on a method of Khare-Wintenberger \cite{kw} that in appropriate settings produces lifts of prescribed inertial type for a potentially modular mod $p$ representation $\rhobar$; when $\rhobar$ is in fact modular, the method combines with modularity lifting theorems to produce congruences between modular forms.

The present paper is a contribution to the local aspect of this story for Galois representations valued in general reductive groups. Namely, the method of \cite{kw} depends in part on having an adequate understanding of the structure of the generic fibers of local Galois deformation rings, and we focus our attention here. Let $\ell$ and $p$ be distinct primes, $K$ be a finite extension of $\QQ_{\ell}$, and $E$ be a finite extension of $\QQ_p$, with ring of integers $\O_E$ and residue field $k$. Consider a smooth group scheme $G$ over $\O_E$ with reductive generic fiber,\footnote{In particular, $G$ and its generic fiber need not be connected. As the definition of a reductive group scheme typically includes a connectedness hypothesis, we avoid that language.} let $\Gamma_K= \mathrm{Gal}(\overline{K}/K)$, and consider a continuous representation
\[
\rhobar \colon \Gamma_K \to G(k).
\]
The functor of lifts of $\rhobar$ to artin local $\O_E$-algebras with residue field $k$ is pro-represented by a complete local noetherian $\O_E$-algebra $R^\square_{\rhobar}$. This ring may be quite singular; the object of this paper is achieve some control over its \textit{generic fiber} $R^\square_{\rhobar}[\frac{1}{p}]$. The main result is the following. First we recall that to each homomorphism $\tau \colon I_K \to G(E)$ that factors through a finite quotient, we can associate a quotient $(R^\square_{\rhobar}[\frac{1}{p}])^\tau$ of $R^\square_{\rhobar}[\frac{1}{p}]$ (``with inertial type $\tau$'') which is a union of irreducible components of $R^\square_{\rhobar}[\frac{1}{p}]$.
\begin{thm}\label{main}
For any $\rhobar \colon \Gamma_K \to G(k)$ and $\tau$ as above, $(R^\square_{\rhobar}[\frac{1}{p}])^\tau$ admits a regular, dense open subscheme, and it is equidimensional of dimension $\dim(G)$.
\end{thm}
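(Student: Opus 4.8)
The plan is to reduce the $G$-valued statement to the case of $\GL_n$, where the analogous results are due to Bellovin--Gee and Shotton (building on Choi and others), and then to leverage the structure of the deformation ring via the map to the ``framed Weil--Deligne'' moduli space.

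\textbf{Step 1: The Weil--Deligne model.} Since $\ell \neq p$, any continuous lift $\rho \colon \Gamma_K \to G(A)$ for $A$ an $E$-algebra is, after restriction to a suitable finite-index subgroup, determined by the pair $(r, N)$ consisting of a representation of the Weil group of finite image on inertia together with a nilpotent element $N \in \g_A$ satisfying the usual relation $\Ad(r(\phi))N = q N$ (Grothendieck's $\ell$-adic monodromy theorem, applied in the Tannakian/$G$-valued setting). This gives a description of $(R^\square_{\rhobar}[\frac{1}{p}])^\tau$, after passing to a suitable component, in terms of a scheme parametrizing such pairs with the inertial type $\tau$ fixed. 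The first task is to set this up carefully: define the moduli space $X_\tau$ of pairs $(r,N)$ with $r|_{I_K} = \tau$ (up to the finite ambiguity), identify the framed deformation ring's generic fiber as (an open in) a $\widehat{G}$-torsor --- really a $G$-torsor, with $G$ acting by conjugation --- over $X_\tau$, so that it suffices to prove $X_\tau$ is generically regular and equidimensional of dimension $\dim G - \dim Z$ for the appropriate centralizer-corrected count. Concretely, fixing $r$ with the right restriction to inertia, the space of valid $N$ is the $q$-eigenspace of $\Ad(r(\phi))$ intersected with the nilpotent cone, and one varies $r$ over the (smooth) space of extensions of $\tau$ to the Weil group.

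\textbf{Step 2: Stratify by the centralizer and analyze fibers.} Stratify $X_\tau$ according to the $G$-conjugacy class, or rather the isomorphism type of the centralizer, of the associated $(r,N)$. On each stratum one has a fibration: the base records the semisimple part $r$ (lying in a smooth variety, as $\tau$ is fixed and extensions to $W_K/I_K \cong \widehat{\ZZ}$ are governed by the connected reductive centralizer $H = Z_G(\tau)^{\circ}$ --- here one uses that choices of $r$ extending $\tau$ form a torsor-like space over $H$), and the fiber records $N$ in the appropriate graded piece of the nilpotent cone of $\Lie Z_G(r)$. The key input is that for a (possibly disconnected) reductive group $H$ acting on the variety $\{N \in \h : \Ad(s)N = qN\}$ where $s$ is a fixed semisimple element, the quotient is well-behaved: the relevant piece is a union of nilpotent orbits under $Z_H(s)$, and a dimension count (using that $\dim Z_H(s) - \dim(\text{orbit of } N) = \dim Z_H(s,N)$, the dimension of the full centralizer) shows every component of each stratum contributes exactly $\dim G$ to the total, with equality of dimension across strata forced by openness of the ``most generic'' stratum (where $N$ is a principal/regular nilpotent in the centralizer of $r$, or $N = 0$, whichever is forced by the type). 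Regularity on a dense open comes from identifying that open with a smooth fibration --- smooth base (the space of $r$'s is an $H$-torsor quotient, hence smooth) and smooth fibers (an open cell in the nilpotent cone, or a single orbit, which is smooth).

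\textbf{Step 3: Descent from a cover and the disconnected case.} Because $G$ need not be connected, reduce first to $G^\circ$ by noting $\rhobar$ has open image meeting finitely many components; $\Gamma_K$-cohomology and the deformation functor decompose according to the induced map $\Gamma_K \to \pi_0(G)$, and the deformation ring for $G$ is obtained from that for $G^\circ$ (with a twisted Galois action) by a finite flat base change, which preserves regularity of a dense open, equidimensionality, and the dimension $\dim G = \dim G^\circ$. For connected $G$, embed $G \into \GL_n$ via a faithful representation; the induced map on framed deformation rings, after inverting $p$, is formally smooth onto its image of relative dimension $\dim \GL_n - \dim G$ on the locus where the residual representation has the expected centralizer, by an obstruction-theory computation with $\ad$ versus $\gl_n/\g$ (the relevant $H^2$'s vanish generically on the $\GL_n$ side and the $H^1$ matches up). Then transport the Bellovin--Gee/Shotton results for $\GL_n$ --- that the generic fiber of the $\GL_n$-deformation ring with fixed inertial type is generically regular and equidimensional of dimension $n^2$ --- through this smooth map.

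\textbf{Main obstacle.} I expect the hardest part to be Step 2's uniform dimension count across all strata in the \emph{disconnected, possibly non-semisimple centralizer} setting: one must show no stratum has excess dimension, i.e., that the naive expected dimension $\dim G$ is actually achieved on every irreducible component and never exceeded. In the $\GL_n$ case this rests on delicate facts about the geometry of the commuting-type varieties $\{(g, N) : gNg^{-1} = qN\}$; replicating the argument for a general reductive $G$ requires either a clean Tannakian reduction to $\GL_n$ (making Step 3 do the real work, with Step 2 only for bookkeeping) or a direct analysis using the Bala--Carter/Jacobson--Morozov parametrization of nilpotent orbits, together with control of component groups $\pi_0(Z_G(r,N))$, which is where disconnectedness of $G$ bites hardest. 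I would first attempt the Tannakian route, pushing as much as possible onto the established $\GL_n$ theory, and fall back on the intrinsic stratification argument only for the steps (like equidimensionality, which is not obviously inherited through a merely smooth map unless the image is also well-understood) that genuinely require it.
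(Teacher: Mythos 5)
Your proposal shares the paper's starting point (translate to Weil--Deligne representations and study a moduli space of pairs $(\Phi,N)$ with fixed inertial type), but the two steps that carry the real content have genuine gaps. First, the link between $(R^\square_{\rhobar}[\frac{1}{p}])^\tau$ and the moduli space is not a $G$-torsor, and ``generically regular and equidimensional of dimension $\dim G - \dim Z$'' for $X_\tau$ is not the statement you need: Galois representations correspond only to \emph{bounded} Weil--Deligne representations, and the actual transfer (following Kisin) is a formally smooth morphism $\Spec A_x \to WD_G$ at completions of closed points, after which the dimension $\dim G$ comes from the local Euler characteristic formula together with the vanishing of the obstruction space $H^2(D_x)$ at the chosen point --- not from knowing $\dim X_\tau$. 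Correspondingly, the geometric input the paper needs is much weaker than your Step 2: not equidimensionality of $X_\tau$ or a uniform dimension count over a centralizer stratification, but only that in every irreducible component of every fiber $X_{N,\tau}$ of $X_\tau \to Y_\tau$ there is a point where the cokernel of $q\Ad\Phi - 1 \oplus \ad N$ on $\g^{I_{L/K}}$ vanishes. The paper proves this by taking an arbitrary $\Phi$ in the component and moving it, within the $Z_G(N)\cap Z_G(\tau)$-torsor, along explicit $\Ga$'s and $\Gm$'s built from Jacobson--Morozov cocharacters adapted to $N$ (Lemma \ref{lem:cocharlem}), until it becomes unobstructed. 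Your assertion that ``equality of dimension across strata is forced by openness of the most generic stratum'' is exactly the kind of claim that requires proof and is not supplied; it is also not how the equidimensionality in Theorem \ref{main} actually arises.

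Second, the reduction to $\GL_n$ in Step 3 does not do the work you want it to. In characteristic zero $\g$ is indeed a $G$-module direct summand of $\gl_n$, so at a \emph{given} point vanishing of the $\GL_n$-obstruction implies vanishing of the $G$-obstruction; but the theorem for $\GL_n$ only gives a dense open regular locus in the $\GL_n$-deformation space, and the image of the $G$-valued deformation space could sit entirely inside the complement of that locus. Showing that the unobstructed locus is dense \emph{on the $G$-valued space} is precisely the original problem, so the Tannakian route circles back to the claim it was meant to avoid; your own caveat that equidimensionality does not pass through a merely formally smooth map onto an uncontrolled image points at the same issue. This is why the literature (Bellovin, Balaji, Bellovin--Gee, and the present paper) argues directly with the group $G$, and why the intrinsic argument --- here, the cocharacter-chain argument proving Theorem \ref{thm:unobstructed} --- cannot be relegated to bookkeeping.
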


We will now give some of the history behind this result. Theorem \ref{main} generalizes a result of Gee \cite{gee11} in which the group $G$ is taken to be $\GL_n$. His result in turn is the adaptation to the $\ell \neq p$ case of a corresponding result of Kisin \cite{kisin08} that studies the generic fibers of potentially semi-stable deformation rings (in the case $\ell=p$); Kisin introduced many new ideas on which all subsequent work in this subject has been based. Later, Gee's results were reproved and slightly strengthened in \cite{choi09} and \cite{blggt14}, and in  \cite{bellovin16} Bellovin proved a version of Kisin's $\ell=p$ result for general $G$.

\begin{remark}
The results of \cite[Theorem 3.3.4]{kisin08}, and later papers like \cite[Theorem 2.1.6]{gee11} that adapt its arguments, state there is a locus in the generic fiber that is formally smooth over $\QQ_p$, instead of stating it is regular.  As discussed in Remark~\ref{rmk:formalsmoothness}, regularity does not imply formal smoothness in this setting, and so these results are incorrect as stated.  However, this distinction is irrelevant for the applications to computing the dimension of the generic fiber, as all that is needed is regularity.
\end{remark}

While we were preparing this paper, Bellovin and Gee posted a preprint that treats both the cases $\ell=p$ and $\ell \neq p$ for general groups $G$ \cite{bg17} . In addition to treating the more difficult case $\ell=p$, their work gives a more refined version of our Theorem \ref{main} (finding a dense set of ``very smooth" points), motivated by the refinement of \cite{gee11} established and applied in \cite{blggt14}. Moreover, they extensively pursue global applications to lifting results of Khare-Wintenberger type, including applications to generalizations of the Serre weight conjecture: see the theorems in the introduction of \cite{bg17}.

Because the work in \cite{bg17} is so comprehensive, and because of the timing of our two papers, we have not attempted to push our method to achieve the most refined results; nor have we included any global applications, although they are the motivation behind Theorem \ref{main}. Our proof of Theorem \ref{main} is different from that of the corresponding result in Bellovin-Gee, and when specialized to $G= \GL_n$, it is different from the arguments of \cite{gee11}, \cite{choi09}, and \cite{blggt14}, so we hope it may still be of independent interest. In the remainder of this introduction, we will briefly describe the approach to Theorem \ref{main}.

Common to all of the results related to Theorem \ref{main} is the translation of problems about Galois representations to problems about Weil-Deligne representations. In \cite{gee11}, \cite{bg17}, and our Theorem \ref{main}, the main ($\ell \neq p$) result is reduced using ideas of \cite{kisin08} to a corresponding ``unobstructedness" result in an appropriate moduli space of Weil-Deligne representations; the reduction in \cite{kisin08} is not obvious, but it carries over essentially formally to the settings of these subsequent papers. What is new in our paper is the proof of this unobstructedness result, Theorem \ref{thm:unobstructed}, which occupies \S \ref{sec:moduli}. In brief, there is a moduli space $X \to \Spec E$ (see \S \ref{sec:gwd}) of Weil-Deligne representations, and for any object $D_A \in X(A)$ an explicit complex $C^{\bullet}(D_A)$ whose cohomology controls the deformation theory of $D_A$. Taking $D_A= D_{\Phi, N, \tau}$ to be the universal object, our problem is to show that the cohomology measuring obstructions, $H^2(D_{\Phi, N, \tau})$, a coherent sheaf on $X$, vanishes away from a \textit{dense} open subscheme of $X$. 
The strategy initiated by Kisin shows this by studying an analogous moduli space $Y$ that parametrizes only a monodromy operator and an inertial type, together with the forgetful map $X \to Y$;  the essential and non-formal content of Theorem \ref{main} is the fact that every irreducible component of every non-empty fiber of this map contains an unobstructed point. In the papers \cite{kisin08}, \cite{gee11}, \cite{bellovin16}, and \cite{bg17}, this is achieved essentially by constructing one particular unobstructed point in each component. In contrast, our argument takes any point in such a fiber and connects it by a chain of $\Gm$'s and $\Ga$'s to an unobstructed point. It relies on a series of applications of the Jacobson-Morozov theorem.  This analysis is complicated by two factors.  Unlike the $\GL_n$ situation, the centralizer of the nilpotent monodromy operator may have multiple components, leading to additional components in the fibers.  Furthermore, the adjoint action of the image of Frobenius twists the inertial type (see condition \eqref{condlast} after Definition~\ref{defn:gwd}), while the analogous condition in the $\ell = p$ case is that the inertial type is preserved.  The latter condition defines a subgroup, while the former does not, complicating the argument.

\section{G-Weil-Deligne Representations and Deformation Theory} \label{sec:gwd}

Let $\ell$ and $p$ be distinct primes.  Let $K$ be an $\ell$-adic field, with residue field of order $q=\ell^f$, and let $W_K$ be the Weil group of $K$.  Let $\| \cdot \| : W_K \to W_K/I_K \simeq q^\ZZ $ denote the homomorphism which sends any geometric Frobenius to $q^{-1}$. Throughout this paper, we will fix a geometric Frobenius $\phi \in W_K$. Next consider a finite extension $E$ of $\QQ_p$ with ring of integers $\O_E$ and residue field $k$, and let $G$ over $\O_E$ be a smooth group scheme with reductive generic fiber; for the purposes of the next two sections, we only use the generic fiber, whereas in \S \ref{sec:def} we need a smooth group scheme over $\O_E$ in order to study the deformation theory of $\rhobar$.

\begin{defn} \label{defn:gwd}
For an $E$-algebra $A$, a \emph{$G$-Weil-Deligne representation} over $A$ is a trivial $G$-bundle $D_A$, a homomorphism $r : W_K \to \Aut_G(D_A)$ whose restriction to $I_K$ factors through a finite quotient, and an $N \in \Lie \Aut_G(D_A)$ such that $\Ad r(g) N = \|g\| N$.
\end{defn}

More concretely, if the inertial action factors through a fixed finite extension $L/K$, a $G$-Weil-Deligne representation over $A$ is given by a trivial $G$-bundle $D_A$ equipped with a homomorphism $\tau : I_{L/K} \to \Aut_G(D_A)$, an element $N \in \Lie \Aut_G(D_A)$, and an invertible $\Phi \in \Aut_G(D_A)$ that satisfy:
\begin{enumerate}
 \item $\Ad(\tau(g)) N = N$ for any $g \in I_{L/K}$. \label{condfirst}
 \item $\Ad(\Phi) N = q^{-1} N$. \label{condsecond}
 \item $\Ad(\Phi) \tau(\gamma) = \tau(\phi \gamma \phi^{-1})$ for all $\gamma \in I_{L/K}$. \label{condlast}
\end{enumerate}
We take $\Phi = r(\phi)$ and $\tau = r|_{I_K}$: recall that $\phi$ is the fixed geometric Frobenius in $W_K$.  

\begin{remark} \label{remark:nilpotent}
When $A$ is a field, note that condition \eqref{condsecond} implies that $N$ is nilpotent.  In particular, for each finite dimensional $G$-module, $N$ acts nilpotently on that module since \eqref{condsecond} forces the eigenvalues to be zero.  
\end{remark}

Let $\WD$ denote the category whose objects are pairs consisting of an $E$-algebra $A$ and a $G$-Weil-Deligne representation $(D_A, r, N)$ over $A$; a morphism $(A, D_A, r, N) \to (A', D_{A'}, r', N')$ is an $E$-algebra map $A \to A'$ and an isomorphism $D_A \otimes_A A' \xrightarrow{\sim} D_{A'}$ of $G$-bundles intertwining the actions of $(r,N)$ and $(r', N')$. We may regard $\WD$ as a category cofibered in groupoids over the category of $E$-algebras. We define $\WDL$ as the analogous category in which the $I_K$-action factors through the fixed finite quotient $I_{L/K}$.

\begin{remark}
We can identify $\Aut_G(D_A)$ with $G$ by picking a trivializing section, obtaining a homomorphism $r : W_K \to G(A)$.  This is analogous to the way a free $A$-module of rank $d$  with action of a group $\Gamma$ can be identified with a homomorphism $\Gamma \to \GL_d(A)$ after a choice of basis.
\end{remark}

We wish to study deformation theory in $\WDL$.  Let $\ad D_A := \Lie \Aut_G D_A$; the adjoint action gives an action of $I_{L/K}$ and of $\Phi$ on $\ad D_A$.  Consider the anti-commutative diagram
\[
\xymatrix{
(\ad D_A)^{I_{L/K}} \ar[rr]^{1-\Ad \Phi }\ar[d]^{\ad N} & & (\ad D_A)^{I_{L/K}} \ar[d]^{\ad N} \\
(\ad D_A)^{I_{L/K}} \ar[rr]^{q \Ad \Phi -1} & & (\ad D_A)^{I_{L/K}}.
}
\]
Let $C^\bullet(D_A)$ denote the total complex (indexed so as to be in degrees $0$, $1$, and $2$) associated to this double complex, and let $H^i(D_A)$ denote the $i$th cohomology. Note that the construction of $H^2(D_A)$ commutes with arbitrary base change.

Let $A$ be an Artin local $E$-algebra with maximal ideal $\m_A$, and let $I \subset A$ be an ideal with $I \m_A=0$.  Let $D_{A/I} \in \WDL(A/I)$, and define $\overline{D} = D_{A/I} \tensor{A/I} A/\m_A$.  Two liftings $D_A$ and $D'_A$ are equivalent if there exists a map $D_A \to D'_A$ of $G$-torsors compatible with the Weil-Deligne structure that reduces to the identity modulo $I$.

\begin{prop}\label{WDdef}
 If $H^2(\overline{D})=0$, then a lift $D_A \in \WDL(A)$ of $D_{A/I}$ exists.  The set of equivalence classes of liftings of $D_{A/I}$ to $A$ is a (possibly empty) torsor under $H^1(\overline{D}) \tensor{A/m_A} I$.
\end{prop}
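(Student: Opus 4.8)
The plan is to run the usual obstruction calculus for the deformation problem of $D_{A/I}$ in $\WDL$, the only ingredient beyond formal manipulation being that, as $A$ is an $E$-algebra, both $A$ and its residue field $A/\m_A$ have characteristic zero, so $H^{\geq 1}(I_{L/K}, M) = 0$ for every representation $M$ of the finite group $I_{L/K}$ on an $A/\m_A$-vector space. First I would reduce to trivial bundles: since $G$ is smooth and $\Spec A$ is local, every $G$-bundle over $A$ is trivial and a trivialization of $D_{A/I}$ extends to any prospective $D_A$; so $D_{A/I}$ is given by a triple $(\Phi_0, N_0, \tau_0)$ over $A/I$ satisfying (\ref{condfirst})--(\ref{condlast}), a lift is a triple $(\Phi, N, \tau)$ over $A$ reducing to it and satisfying the same relations, and two lifts are equivalent exactly when conjugate by an element of $\ker(G(A) \to G(A/I)) \cong \ad\overline{D}\otimes_{A/\m_A} I$ (written additively, as $\m_A I = 0$).

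Next I would rigidify the inertial type. Applying the vanishing of $H^{\geq 1}(I_{L/K},-)$ to $\ad\overline{D}\otimes I$ with its $\Ad\overline{\tau}$-action, and to its twist by the automorphism $g \mapsto \phi g \phi^{-1}$ of $I_{L/K}$, we get: (i) $\tau_0$ lifts to a homomorphism $\widetilde{\tau}\colon I_{L/K} \to G(A)$ (obstruction in $H^2 = 0$), and any two such lifts are $\ker(G(A)\to G(A/I))$-conjugate ($H^1 = 0$); and (ii) for such a fixed $\widetilde\tau$, $\Phi_0$ lifts to some $\widetilde\Phi \in G(A)$ still satisfying (\ref{condlast}) relative to $\widetilde\tau$ (the obstruction to modifying an arbitrary lift of $\Phi_0$ into one satisfying (\ref{condlast}) is a class in the vanishing group $H^1$ of $I_{L/K}$ acting on $\ad\overline{D}\otimes I$ by $\phi$-twisted conjugation). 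By (i), every equivalence class of lifts has a representative with $\tau = \widetilde\tau$, and for two such representatives the equivalence relation is realized only by conjugation by $1+w$ with $w$ in the $\widetilde\tau$-invariants $(\ad\overline{D}\otimes I)^{I_{L/K}}$. Here I use that in characteristic zero the averaging idempotent $e = |I_{L/K}|^{-1}\sum_{g}\Ad\widetilde\tau(g)$ on $\ad D_A$ is defined over $A$, so $(\ad D_A)^{I_{L/K}} = e\cdot\ad D_A$ is a direct summand compatible with base change and reducing to $(\ad\overline{D})^{I_{L/K}}$; and that $\Ad\widetilde\Phi$ preserves it, because (\ref{condlast}) makes $\Ad\widetilde\Phi$ carry $e$ to itself via the permutation $g\mapsto \phi g\phi^{-1}$ of $I_{L/K}$. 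In particular $\Ad\widetilde\Phi$ and $\ad N$ (for $N$ in the summand) act on $(\ad D_A)^{I_{L/K}}$, which is exactly what gives meaning to the double complex above.

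It then remains to lift $(\Phi_0, N_0)$ with $\tau = \widetilde\tau$ held fixed. Replacing a chosen lift $\widetilde N$ of $N_0$ by $e\widetilde N$, we may assume $\widetilde N$ satisfies (\ref{condfirst}); then $\mu := \Ad\widetilde\Phi\,\widetilde N - q^{-1}\widetilde N$ reduces to zero mod $I$ by condition (2) for $(\Phi_0, N_0)$ and lies in the summand by the previous paragraph, so $\mu \in (\ad\overline{D})^{I_{L/K}}\otimes I = C^2(\overline{D})\otimes I$. Write a general lift with $\tau = \widetilde\tau$ as $\Phi = (1+x)\widetilde\Phi$, $N = \widetilde N + n$; condition (\ref{condlast}) forces $x \in (\ad\overline{D})^{I_{L/K}}\otimes I$ and condition (\ref{condfirst}) forces $n \in (\ad\overline{D})^{I_{L/K}}\otimes I$, and granting this, (\ref{condfirst}) and (\ref{condlast}) hold automatically while a short computation rewrites condition (2) as $d^1(-x, n) = -q\mu$ in $C^2(\overline{D})\otimes I$, where $d^1 \colon C^1(\overline{D}) \to C^2(\overline{D})$ is the differential of the total complex $C^\bullet(\overline{D})$. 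Simultaneously, conjugating by $1+w$ translates $(-x, n)$ by $d^0(w)$ for $w \in C^0(\overline{D})\otimes I$. Hence the equivalence classes of lifts of $D_{A/I}$ are in bijection with the solutions of $d^1(\xi) = -q\mu$ in $C^1(\overline{D})\otimes I$ modulo $\Im(d^0)\otimes I$. This set is nonempty precisely when $\mu$ maps to $0$ in $H^2(\overline{D})\otimes I$, hence a fortiori when $H^2(\overline{D}) = 0$; and when nonempty it is a torsor under $\ker(d^1)\otimes I \,/\, \Im(d^0)\otimes I = H^1(\overline{D})\otimes_{A/\m_A} I$, using that $I$ is flat over the field $A/\m_A$.

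The step I expect to demand the most care is the bookkeeping underlying the last two paragraphs: tracking precisely which conjugations fix $\widetilde\tau$, verifying that $e$ and $\Ad\widetilde\Phi$ are compatible with reduction mod $I$ so that $\mu$ genuinely lands in $C^2(\overline{D})\otimes I$, and pinning down the signs and powers of $q$ so that condition (2) is literally a unit multiple of $d^1$ applied to $(-x, n)$. Everything else is formal; the one substantive point is that characteristic zero makes the inertial type rigid and lets the summand $(\ad D_A)^{I_{L/K}}$, hence the three-term complex $C^{\bullet}$, carry the entire deformation theory.
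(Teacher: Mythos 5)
Your proposal is correct and follows essentially the same route as the paper, whose proof of Proposition \ref{WDdef} is simply the standard obstruction calculus cited from \cite[Proposition 3.2]{bellovin16} and \cite[Lemma 2.1.1]{gee11}: rigidify the inertial type using the characteristic-zero vanishing of $H^{\geq 1}(I_{L/K},-)$, then identify lifts of $(\Phi,N)$ modulo conjugation by $1+w$, $w \in (\ad \overline{D})^{I_{L/K}} \otimes I$, with the cocycle/coboundary calculus of $C^\bullet(\overline{D}) \otimes_{A/\m_A} I$. No gaps to report.
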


\begin{proof}
The proof is essentially the same as that of \cite[Proposition 3.2]{bellovin16} and \cite[Lemma 2.1.1]{gee11}.
\end{proof}

We let $X$ denote the functor on the category of $E$-algebras which associates to an $E$-algebra $A$ the set of possible triples 
\[
(\Phi,N,\tau) \in G(A) \times \g_A \times \Rep_{A}(I_{L/K})
\]
that satisfy \eqref{condfirst}-\eqref{condlast} of \S\ref{sec:gwd}.  This is represented by a finite type locally closed subscheme of the space of all possible triples obtained by imposing the conditions.  Likewise, we consider a functor $Y$ consisting of pairs $(N,\tau)$ satisfying condition (1).  We also have $X_{N,\tau}$, $X_\tau$, and $Y_\tau$, where the subscripts denote a fixed choice of that variable. 
There are natural (forgetful) maps between these spaces.  Any object of $X$ defines a $G$-Weil-Deligne representation by viewing $G$ as a trivial $G$-torsor.

We write $D_{\Phi,N,\tau}$ for the universal triple on $X=\Spec R$, and define the sheaf $\Hcal$ on $X$ as the cokernel of
\begin{equation} \label{eq:hcaldef}
 q \Ad \Phi - 1 \oplus \ad N:  \g_R^{I_{L/K}} \oplus \g_R^{I_{L/K}} \to \g_R^{I_{L/K}}
\end{equation}
It is a coherent sheaf on $X$, and by semi-continuity the locus where it vanishes is open.  For a closed point $x$ with residue field $A$ corresponding to a $G$-Weil-Deligne representation $D_A$, we see that $\Hcal_x \simeq H^2(D_A)$.
We say a point $x$ is \emph{unobstructed} if $\Hcal_x = 0$.

The main technical result we will prove is the following:

\begin{thm}~\label{thm:unobstructed}
The unobstructed points are dense in $X$.
\end{thm}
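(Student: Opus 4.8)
The plan is to deduce Theorem~\ref{thm:unobstructed} from the statement that the open unobstructed locus $U \subseteq X$ meets every irreducible component of every non-empty fiber of the forgetful morphism $\pi \colon X \to Y$, $(\Phi,N,\tau) \mapsto (N,\tau)$. This suffices: $U$ is open because $\Hcal$ is coherent, and if $U$ were not dense then $V := X \setminus \overline{U}$ would be a non-empty open subset; any closed point $x \in V$ maps to a closed point $y$ of $Y$, and then the non-empty open subset $V \cap X_y$ of $X_y$ would be disjoint from $U$, contradicting that $U$ meets (hence is dense in) every component of $X_y$. By faithfully flat base change we may replace $y$ by a geometric point, so it is enough to fix an algebraically closed field $k' \supseteq E$ (say $k' = \overline{E}$) and a point $(N,\tau) \in Y(k')$ with $X_{N,\tau} \neq \emptyset$, and to show $U$ meets every irreducible component of $X_{N,\tau}$. (All the structures below commute with base change.)

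Two preliminary reductions make this concrete. First, choosing $\Phi_0 \in X_{N,\tau}$ one checks that $X_{N,\tau} = \Phi_0 \cdot H$, where $H := Z_G(N) \cap Z_G(\tau(I_{L/K}))$ is the centralizer of the nilpotent $N$ in the group $Z := Z_G(\tau(I_{L/K}))$; in characteristic $0$ the group $Z$ is smooth with reductive identity component and $\Lie Z = \mathfrak{z} := (\ad D)^{I_{L/K}}$, and $H$ is smooth. Thus $X_{N,\tau}$ is an $H$-torsor, its irreducible components are indexed by $\pi_0(H)$, and every $\Phi \in X_{N,\tau}$ normalizes $H$ (it carries $N$ to $q^{-1}N$, hence normalizes $Z_G(N)$, and it carries $\tau(I_{L/K})$ into $\tau(\phi I_{L/K}\phi^{-1}) = \tau(I_{L/K})$, hence normalizes $Z$). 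Second, choose an $\Ad$-invariant nondegenerate symmetric form on $\mathfrak{z}$; then $\ker(\ad N) = \Lie H$ and $\Im(\ad N) = (\Lie H)^{\perp}$, so $\coker(\ad N \colon \mathfrak{z} \to \mathfrak{z}) \cong (\Lie H)^{\vee}$ compatibly with $\Ad(\Phi)$, which acts on the cokernel through the contragredient of its action on $\Lie H$. Since $C^{\bullet}(D_{\Phi,N,\tau})$ has the same term $\mathfrak{z}$ in degrees $0$ and $2$, unwinding the definitions yields
\[
H^2(D_{\Phi,N,\tau}) \;\cong\; \bigl(\text{the $q$-eigenspace of }\Ad(\Phi)\text{ acting on }\Lie H\bigr)^{\vee}.
\]
Hence $\Phi$ is unobstructed if and only if $q$ is not an eigenvalue of $\Ad(\Phi)$ on $\Lie H$, and it suffices to connect, inside $X_{N,\tau}$ and by a chain of copies of $\Gm$ and $\Ga$, an arbitrary $\Phi$ to some $\Phi'$ with $q \notin \operatorname{Spec}\bigl(\Ad(\Phi')|_{\Lie H}\bigr)$.

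Here Jacobson--Morosov enters. An $\sl_2$-triple $(N,h,f)$ inside $\mathfrak{z}$ determines a cocharacter $\lambda \colon \Gm \to Z$ with $\Ad(\lambda(t))N = t^2 N$; since $\lambda$ lands in $Z$ it commutes with $\tau$, so $t \mapsto \lambda(t)\,\Phi\,\lambda(t)^{-1}$ is a morphism $\Gm \to X_{N,\tau}$, and so is each right translation $t \mapsto \Phi\cdot\mu(t)$ by a $1$-parameter subgroup $\mu$ of $H$ (a cocharacter of a maximal torus of $H$, or a root subgroup $\cong \Ga$). Starting from a given $\Phi$, the idea is to use such families, together with the $\ad(h)$-weight grading of $\mathfrak{z}$ and of $\Lie H$, to normalize $\Phi$: after suitable translations by $H$ and further applications of Jacobson--Morosov to nilpotent elements arising in the layers of $\Lie H$, conjugation by $\lambda$ replaces $\Phi$ by a point commuting with $\lambda$, for which the weight grading of $\Lie H$ is $\Ad(\Phi)$-stable; one then treats the weight spaces one at a time, reducing on the weight-$0$ layer with a maximal torus and root subgroups, while on each positive-weight layer the $t^{-n/2}$-type scaling contributed by a Jacobson--Morosov cocharacter drives the relevant eigenvalues off $q$. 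The chain ends at a point $\Phi'$ of explicit ``semisimple times torus'' shape whose action on $\Lie H$ is visibly graded with every eigenvalue of the form $q^{-n/2}$ ($n \ge 0$) times a factor we can control, none of which equals $q$; because every step lands in $X_{N,\tau}$ and is a copy of $\Gm$ or $\Ga$, this unobstructed $\Phi'$ lies in the same irreducible component of $X_{N,\tau}$ as $\Phi$, completing the argument.

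The main obstacle is precisely that every move must stay inside the fiber $X_{N,\tau}$: any $\Gm$- or $\Ga$-family deforming $\Phi$ must preserve both $\Ad(\Phi)N = q^{-1}N$ and $\Ad(\Phi)\tau(\gamma) = \tau(\phi\gamma\phi^{-1})$, which eliminates most translations and conjugations and forces the systematic use of Jacobson--Morosov (whose cocharacters are the ones compatible with the monodromy relation) and the descent into centralizers of finite groups and of $\sl_2$-triples. Organizing this normalization so that it terminates, and verifying that the obstruction — the $q$-eigenspace of $\Ad(\Phi)$ on $\Lie H$ — genuinely disappears along the chain, rather than merely being exhibited as absent at one preferred point in each component, is the technical core of the proof.
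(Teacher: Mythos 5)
Your setup is correct and close in spirit to the paper's: the reduction of density in $X$ to finding an unobstructed point in each irreducible component of each fiber $X_{N,\tau}$, the identification of $X_{N,\tau}$ as a torsor under $H=Z_G(N)\cap Z_G(\tau)$, the observation that $\Phi$ normalizes $H$, and the duality reformulation of the obstruction ($\Hcal_x=0$ if and only if $q$ is not an eigenvalue of $\Ad\Phi$ on $\Lie H$) are all fine, the last being an equivalent (and pleasant) repackaging of the criterion $\ker(q\Ad\Phi-1)\subset\Im(\ad N)$ used in the paper. But the proposal stops exactly where the actual proof begins. The paragraph beginning ``Starting from a given $\Phi$, the idea is\dots'' is a statement of intent, not an argument, and you say so yourself at the end (``Organizing this normalization so that it terminates \dots is the technical core of the proof''). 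Moreover, the one concrete step you do assert there is dubious: ``conjugation by $\lambda$ replaces $\Phi$ by a point commuting with $\lambda$'' would require a limit of $\lambda(t)\Phi\lambda(t)^{-1}$ to exist in $G$, which is not automatic and is not how the paper proceeds. The paper instead (i) moves $\Phi$ to its semisimple part $\Phi_s$ \emph{within the same component} by showing the unipotent part is $\exp(Y)$ with $Y$ in $\Lie Z_G(\tau)$ (resp.\ $\Lie(Z_G(N)\cap Z_G(\tau))$), so that $\Phi_s\exp(tY)$ is a $\Ga$-path in the fiber, and then (ii) applies the Gross--Reeder refinement of Jacobson--Morosov (Lemma~\ref{lem:cocharlem}) to choose a \emph{new} cocharacter adapted to $N$ whose adjoint action commutes with the given semisimple element --- the opposite of normalizing $\Phi$ against a fixed $\lambda$.

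The genuinely hard point, which your sketch does not touch, is what to do when the bad eigenvalue sits where the Jacobson--Morosov scaling gives no leverage (in your picture, the weight-zero layer of $\Lie H$; in the paper's dual picture, the lowest-weight spaces $\g_n^{\lw}$, $n\le 0$). ``Reducing on the weight-$0$ layer with a maximal torus and root subgroups'' is not an argument: arbitrary translations by a torus or root subgroup of $H$ stay in the fiber, but you give no reason they can always remove a $q$-eigenvalue there, and no termination argument for the proposed normalization. The paper resolves precisely this by considering $\Psi=\Phi\lambda(q^{1/2})\in Z_G(N)$, splitting into the cases where $\Ad(\Psi)$ has finite or infinite order: in the finite-order case an eigenvalue computation rules out the bad eigenvalue outright, and in the infinite-order case one chooses a cocharacter $\lambda'$ of $Z_G(N)\cap Z_G(\tau)$ commuting with $\Ad\Phi$ whose image contains a power of $\Psi$, and shows a generic translate $\Phi\lambda'(t)$ is unobstructed. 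The separate induction needed when $N=0$ (decreasing the dimension of the $q^{-1}$-generalized eigenspace, using a nilpotent $N'$ in that eigenspace and a cocharacter adapted to it) is likewise absent. So the proposal correctly identifies the strategy and the reduction, but the core of the proof --- the construction of the connecting chain and the verification that it ends at an unobstructed point --- is missing.
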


The argument is inspired by the proofs of \cite[Lemma 3.1.5]{kisin08}, \cite[Lemma 2.1.3]{gee11}, and \cite[Proposition 5.2]{bellovin16}.  We will show that the unobstructed points are dense in the fiber $X_{N,\tau}$ of $X \to Y$ over any fixed $\tau$ and $N$.  As being unobstructed is an open condition, it suffices to find a single unobstructed point in each irreducible component of the fiber.  The proofs in these papers (and the proof of \cite[Theorem 2.3.6]{bg17}) proceed by writing down a point in each component, either by hand or by using the theory of associated cocharacters, and then directly verifying that the point is unobstructed.  In contrast, we start with an arbitrary point and connect it by a succession of $\Ga$'s and $\Gm$'s to an unobstructed point.  We will carry this out in the next section.

\section{Moduli Spaces for G-Weil-Deligne Representations}\label{sec:moduli}

Let $e$ be a nilpotent element in $\Lie G_{\overline{E}}$.  The proof of Theorem~\ref{thm:unobstructed} repeatedly uses cocharacters that interact well with $N$.

\begin{defn}
For a subgroup $H \subset G_{\overline{E}}$ with $e \in \Lie H$, a cocharacter $\lambda: \Gm \to H$ is adapted to $e$ provided that $\Ad \lambda(t) e = t^2 e$.
\end{defn}

As we are in characteristic zero, the nilpotent $e$ can be extended to an $\sl_2$-triple by the Jacobson-Morozov theorem.  If we exponentiate the triple and restrict to the diagonal $\Gm$, we obtain a cocharacter adapted to $e$.  This process also provide examples of the associated cocharacters used in \cite{bellovin16}.  Note that a general cocharacter adapted to $N$ is not necessarily associated to $N$.

Let $H$ be a reductive subgroup of $G_{\overline{E}}$, and set $\h = \Lie H$.  Consider a semi-simple $g \in G(\overline{E})$  such that $\Ad g (\h) = \h$, and a non-zero nilpotent $e \in \h$ such that $ \Ad(g) e = \alpha e$ for some non-zero $\alpha \in \overline{E}$.  We will find a cocharacter adapted to $N$ that interacts well with $g$.

\begin{lem} \label{lem:cocharlem}
There exists a cocharacter $\lambda: \Gm \to H^\circ$ adapted to $e$ such that for all $t$
\[
 \Ad \lambda(t) \Ad g = \Ad g \Ad \lambda(t).
\]
\end{lem}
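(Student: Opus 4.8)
The plan is to produce $\lambda$ via the Jacobson–Morosov theorem applied in a well-chosen reductive subgroup, exploiting the conjugacy of $\mathfrak{sl}_2$-triples through a given nilpotent to arrange commutation with $\Ad g$. First I would pass to the group $M = Z_H(g)$, the centralizer of the semisimple element $g$ in $H$; since $g$ is semisimple and $\Ad g$ preserves $\mathfrak{h}$, the group $M$ is reductive, and I claim $e \in \Lie M = \mathfrak{h}^{\Ad g}$. The point here is that $\Ad(g) e = \alpha e$ with $e$ nilpotent forces $\alpha$ to be a root of unity: the eigenvalues of $\Ad g$ on $\mathfrak{h}$ all have the same finite order (as $g$ has finite order modulo the center of $H$, after possibly replacing $g$ by a power — or more directly, one argues on the level of the reductive group that $\Ad g$ acting on $\mathfrak{h}$ has eigenvalues that are roots of unity times the "real" eigenvalues, but since $e$ is nilpotent and $\exp$ of it gives a unipotent conjugate-scaled element, a dimension/weight count shows $\alpha$ must be a unit of finite order). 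Actually the cleanest route: the line $\overline{E}\cdot e$ is $\Ad g$-stable, and the cocharacter $\lambda_0$ we will build must be $\Ad g$-equivariant up to the scaling by $\alpha$; rather than fight this, I would instead observe that $\Ad g$ permutes the (finitely many) $\mathfrak{sl}_2$-triples through $e$ in a suitable sense and use an averaging/fixed-point argument.

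More precisely, here is the step I would actually carry out. Choose any $\mathfrak{sl}_2$-triple $(e, h_0, f_0)$ in $\mathfrak{h}$ through $e$, which exists by Jacobson–Morosov since $H$ is reductive in characteristic zero. Applying $\Ad g$ gives another triple $(\alpha e, \Ad(g)h_0, \alpha^{-1}\Ad(g)f_0)$ — note this is a triple through $\alpha e$, not $e$, so I first rescale: replace $(e,h_0,f_0)$ by the triple whose nilpositive part is a fixed choice, and track how $\Ad g$ moves it. The subtlety is that $\Ad(g)$ sends the triple through $e$ to a triple through $\alpha e$; but by the Kostant section / conjugacy theorem, all $\mathfrak{sl}_2$-triples with a fixed nilpositive element are conjugate under the centralizer of that element, and in fact $h_0$ is determined up to conjugacy by the unipotent radical of $Z_H(e)$. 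I would therefore argue: the element $g$ normalizes $Z_H(e)^\circ$ up to the scaling issue, and the semidirect product $\langle g \rangle \ltimes Z_H(e)^\circ$ acts on the (affine space, hence contractible, hence with fixed points for the finite part) of choices of $\mathfrak{sl}_2$-completions; by a Borel-type fixed point theorem for the action of the finite cyclic group $\langle \bar g\rangle$ on this affine variety (or simply by averaging the "semisimple part" $h_0$ over the cyclic group), I obtain a triple $(e, h, f)$ with $\Ad(g) h = h$. Exponentiating the resulting $\mathfrak{sl}_2 \to \Lie H$ and restricting to the diagonal torus yields $\lambda : \Gm \to H^\circ$ adapted to $e$ with $\Ad\lambda(t)$ landing in the centralizer of $\Ad g$ — indeed $\Ad g$ fixes $h$, hence commutes with $\exp$ of the associated parabolic data, hence with $\lambda(t)$ for all $t$.

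The main obstacle I anticipate is handling the twist by $\alpha$ cleanly: because $\Ad(g) e = \alpha e$ rather than $e$, $g$ does not literally normalize $Z_H(e)$, and the naive averaging of $h_0$ must be done inside the correct $g$-stable subspace. I expect the resolution is to note that $\Ad g$ acts on $\mathfrak{h}$ with the line $\overline{E}e$ as an eigenline, so one can choose a $g$-stable complement and work there; alternatively, replace $g$ by $g^k$ for $k$ the order of $\alpha$ (which is a root of unity once one checks it — this itself needs the observation that $\Ad g$ preserves a Cartan decomposition of the reductive group $H$ up to the finite order of its action on the root datum), reduce to the case $\alpha = 1$, build $\lambda$ commuting with $\Ad g^k$, and then re-examine. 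Since the conclusion only asserts commutation with $\Ad g$ itself, and the set of $\lambda$ commuting with $\Ad g^k$ is a $g$-stable family among which one again finds a $g$-fixed point, this bootstraps. The routine verifications (that $\lambda$ is adapted to $e$, that $H^\circ$ suffices since $\mathfrak{sl}_2$-triples live in the identity component) I would not belabor.
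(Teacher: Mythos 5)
Your proposal has a genuine gap, and it occurs at the foundational step. You claim that $\Ad(g)e=\alpha e$ with $e$ nilpotent forces $\alpha$ to be a root of unity, and you then build everything (passing to $Z_H(g)$, averaging over the finite cyclic group $\langle \bar g\rangle$, replacing $g$ by $g^k$ to reduce to $\alpha=1$) on that claim. This is false: a semisimple element can rescale a nilpotent by any unit, e.g.\ $g=\mathrm{diag}(t,t^{-1})$ in $\GL_2$ with $e$ the standard upper-triangular nilpotent gives $\Ad(g)e=t^{2}e$. Worse, in the situation where the lemma is actually applied in this paper, $\alpha$ is $q^{-1}$ (the Frobenius scaling of the monodromy operator), which is certainly not a root of unity, so the reduction you propose cannot get off the ground. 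Relatedly, your averaging of $h_0$ over $\langle \Ad g\rangle$ presupposes that this group is finite, which is not given ($g$ is merely semisimple, e.g.\ $g=\Phi_s$ has infinite order in general), and a Borel-type fixed point theorem does not apply to the action of an abstract (possibly infinite) cyclic group on an affine space, so that route also does not close the gap as written.

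There is a correct fixed-point statement hiding behind your idea: $\Ad g$ does preserve the affine space of neutral elements $h$ with $[h,e]=2e$ that complete $e$ to an $\sl_2$-triple (since $[\Ad(g)h_0,e]=2e$ even when $\alpha\neq 1$), and semisimplicity of $\Ad g$ guarantees a fixed point there; but you would need to prove this, and the cleanest proof is exactly the paper's argument (after Gross--Reeder): decompose $\h=\bigoplus_\mu \h(\mu)$ into $\Ad g$-eigenspaces, note $e\in\h(\alpha)$ and that bracketing with $e$ shifts $\h(\mu)$ to $\h(\mu\alpha)$, and then take $h$ to be the $\h(1)$-component of $h_0$ and $f$ the $\h(\alpha^{-1})$-component of $f_0$; comparing eigencomponents in the $\sl_2$-relations shows $(e,h,f)$ is again a triple, with $h$ fixed by $\Ad g$, and exponentiating gives the desired cocharacter. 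No finiteness of $\alpha$ or of $g$ is needed, which is precisely what your approach is missing.
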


\begin{proof}
We learned this argument from \cite[Lemma 2.1]{gross-reeder}. Consider the adjoint action of $g$ on the Lie algebra $\h$.  We will construct the desired cocharacter by constructing an $\sl_2$-triple compatible with the eigenspace decomposition 
\[
 \h = \bigoplus_{\mu} \h(\mu),
\]
where $\h(\mu)$ is the eigenspace for $\Ad g$ with eigenvalue $\mu$.  (As $g$ is semi-simple, we do not need generalized eigenspaces.)  The Jacobson-Morozov theorem gives an $\sl_2$-triple $(e, h_0,  f_0)$ in $\h$.  Note that $e \in \h(\alpha)$.  For any $h_\mu \in \g(\mu)$, $$\Ad(g)[h_\mu , e] = [\Ad(g) h_\mu , \Ad(g) e] = \mu \alpha [h_\mu,e],$$
so $[h_\mu,e]$ is an element of $\h( \mu \alpha)$.  Now decompose
$h_0 = h + h'$ with $h \in \h(1)$ and $h' \in \bigoplus_{\mu \neq 1} \h(\mu)$.  Using that $2 e = [h_0,e] = [h,e] + [h',e] \in \h(\alpha)$ and considering eigenspaces, we conclude that $2e = [h,e]$.  In particular, $\h(1) \neq 0$.  Likewise, we decompose $f_0 = f + f'$ with $f \in \h(\alpha^{-1})$ and $f' \in \bigoplus_{\mu \neq \alpha^{-1}} \h(\mu)$ and keep track of eigenspaces in the relation $h_0 = [e,f_0] = [e,f] + [e,f']$ to conclude that $h = [e,f]$.  Likewise we see that $[h,f] = - 2 f$, so $(e, h, f)$ is another $\sl_2$-triple; to obtain the cocharacter adapted to $e$, we exponentiate and restrict to the diagonal $\Gm$. Then we see that $\Ad g \Ad \lambda(t) = \Ad \lambda(t) \Ad g$ as $h \in \h(1)$.
\end{proof}

We now proceed with the proof of Theorem~\ref{thm:unobstructed}. 

\begin{proof}
Fix $\tau \colon I_{L/K} \to G(E)$. It suffices to show every non-empty fiber $X_{N,\tau}$ of the forgetful map $X_\tau \to Y_\tau$ contains a dense open subset on which $\Hcal$ vanishes. As this is an open condition, it suffices to find a single closed point $x$ in each irreducible component of $X_{N,\tau}$ for which $\Hcal_x=0$.

Consider the fiber $X_{N,\tau}$ over a fixed $N \in (\Lie G)(E')$ for some finite extension $E'/E$; assume it is non-empty, so that we have additionally a $\Phi \in G(\overline{E})$ such that the relations (1)-(3) of \S \ref{sec:gwd} hold for $\Phi, N, \tau$; in this particular fiber we are only allowed to vary $\Phi$. To find an unobstructed point in the fiber, we may work over $\overline{E}$, and to simplify notation, we change notation and let $G$ (and similarly $N$) be defined over $\overline{E}$.  Fix a square root of $q$ in $\overline{E}$.  By Remark~\ref{remark:nilpotent}, $N$ is nilpotent.

We break into cases depending on whether $N = 0$.  In both cases, the strategy is to connect $\Phi$ by $\Ga$'s and $\Gm$'s to an unobstructed point.  A basic but important observation is that the fiber $X_{N,\tau}$ is a  $Z_G(N)\cap Z_G(\tau)$-torsor.  As the fiber is a torsor under $Z_G(N) \cap Z_G(\tau)$, the irreducible components are the same as connected components.  Furthermore, for homomorphisms $\lambda: \Gm \to Z_G(N) \cap Z_G(\tau)$ or $\psi: \Ga \to Z_G(N) \cap Z_G(\tau)$, $\Phi \lambda(t)$ and $\Phi \psi(s)$ are in the same component of the fiber as $\Phi$.  

\textbf{Case 1}:  When $N=0$, to be unobstructed means that $q \Ad \Phi - 1$ is invertible on $\g^{I_{L/K}}$.  Given $\Phi \in X_{0,\tau}(\overline{E})$ such that $q \Ad \Phi - 1$ is not invertible, we will find a $\Phi'$ in the same component as $\Phi$ such that the generalized eigenspace of $\Ad \Phi'$ with eigenvalue $q^{-1}$ has smaller dimension. By induction, this produces an unobstructed point.  

By hypothesis, there is an $N' \in \g^{I_{L/K}}$ such that $q \Ad(\Phi) N' =  N'$.  Note that $N'$ is nilpotent as it is conjugate to $q^{-1} N'$ and so acts nilpotently on every finite-dimensional $G$-module by consideration of eigenvalues.   

\begin{lem} 
There exists $\Phi_s$ in the same component of the fiber $X_{0, \tau}$ as $\Phi$ such that $\Phi_s$ is semi-simple.  There is a cocharacter $\lambda: \Gm \to Z_G(\tau)$ adapted to $N'$ such that $$\Ad \Phi_s \Ad \lambda(t) = \Ad \lambda(t) \Ad \Phi_s.$$
\end{lem}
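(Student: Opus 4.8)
The plan is to first replace $\Phi$ by a semisimple element $\Phi_s$ in the same connected component of the fiber $X_{0,\tau}$, and then invoke Lemma~\ref{lem:cocharlem} to produce the adapted cocharacter commuting with $\Ad \Phi_s$. The key point is that the fiber $X_{0,\tau}$ is a torsor under $H := Z_G(\tau)$ (note that when $N=0$ the centralizer $Z_G(N) \cap Z_G(\tau)$ is just $Z_G(\tau)$), so moving within a connected component means right-multiplying $\Phi$ by elements of $H^\circ$. So the first task is: given $\Phi \in H(\overline{E}) \cdot \Phi$ (i.e. $\Phi$ an element of the coset making up the fiber), find a semisimple element in the same $H^\circ$-coset.

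For the first step, I would use the Jordan decomposition in the algebraic group $G$. Write $\Phi = \Phi_s \Phi_u = \Phi_u \Phi_s$ with $\Phi_s$ semisimple and $\Phi_u$ unipotent, both lying in the Zariski closure of the cyclic group generated by $\Phi$; in particular both commute with everything $\Phi$ commutes with, so both normalize (indeed centralize after passing to the relevant subgroup) $\tau$, hence $\Phi_s \in Z_G(\tau) = H$ and $\Phi_u \in H$ as well. Thus $\Phi_u \in H$, and since $\Phi_u$ is unipotent it lies in $H^\circ$ (unipotent elements of an algebraic group lie in the identity component). Therefore $\Phi = \Phi_u \Phi_s$ with $\Phi_u \in H^\circ$, so $\Phi$ and $\Phi_s$ lie in the same $H^\circ$-coset, i.e.\ $\Phi_s$ lies in the same connected (= irreducible) component of the torsor $X_{0,\tau}$ as $\Phi$. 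One should also check that $\Phi_s$ still satisfies relations (1)--(3): relation (3) $\Ad(\Phi_s)\tau(\gamma) = \tau(\phi\gamma\phi^{-1})$ follows because $\Ad(\Phi_s)$ and $\Ad(\Phi)$ agree on $\tau(I_{L/K})$ — indeed $\Phi_u$ commutes with all $\tau(\gamma)$ since $\Phi_u \in Z_G(\tau)$ — and relation (2) with $N=0$ is vacuous.

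For the second step, I would apply Lemma~\ref{lem:cocharlem} directly with $H = Z_G(\tau)$ (which is reductive: it is the centralizer of a finite — hence linearly reductive in characteristic zero — subgroup, so its identity component is reductive and we may work with $H^\circ$), with $g = \Phi_s$ (semisimple, and $\Ad \Phi_s$ preserves $\h = \Lie H$ since $\Phi_s$ normalizes $H$), and with $e = N'$ the nilpotent element constructed above satisfying $q\,\Ad(\Phi_s) N' = N'$. Here we use that $\Ad(\Phi_s) = \Ad(\Phi)$ on $\g^{I_{L/K}} \supset \h$ after adjusting — more precisely, $\Phi_u$ centralizes $N'$ because $N'$ lies in the fixed space and $q \Ad(\Phi) N' = N'$ forces $N'$ into an eigenspace situation that $\Phi_u$, being unipotent and commuting with $\Phi_s$, must fix — so $N' \in \h$ with $\Ad(\Phi_s) N' = q^{-1} N' = \alpha N'$ for $\alpha = q^{-1} \in \overline{E}^\times$. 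Lemma~\ref{lem:cocharlem} then yields a cocharacter $\lambda \colon \Gm \to H^\circ = Z_G(\tau)^\circ$ adapted to $N'$ with $\Ad \lambda(t) \Ad \Phi_s = \Ad \Phi_s \Ad \lambda(t)$, which is exactly the assertion.

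The main obstacle I anticipate is the bookkeeping in the first step: verifying carefully that the unipotent part $\Phi_u$ lands in $H^\circ$ and genuinely commutes with $\tau$ and with $N'$, so that passing from $\Phi$ to $\Phi_s$ neither leaves the fiber nor changes the relevant connected component, and that $N'$ survives as a nilpotent in $\h$ on which $\Ad \Phi_s$ acts by the scalar $q^{-1}$. Once that is in place, the cocharacter is produced by a black-box application of Lemma~\ref{lem:cocharlem}, and the reductivity of $Z_G(\tau)^\circ$ is standard since $I_{L/K}$ is finite and we are in characteristic zero.
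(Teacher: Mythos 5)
Your overall architecture (pass to the semisimple part $\Phi_s$ via Jordan decomposition, connect $\Phi$ to $\Phi_s$ inside the fiber using the torsor structure under $Z_G(\tau)$, then apply Lemma~\ref{lem:cocharlem} with $H=Z_G(\tau)$, $g=\Phi_s$, $e=N'$) is exactly the paper's, and your second step — including the observation that $\Ad(\Phi_s)N'=q^{-1}N'$ and that $Z_G(\tau)$ is reductive — is fine. But the first step has a genuine gap. You argue that $\Phi_s$ and $\Phi_u$ lie in $Z_G(\tau)$ because they ``commute with everything $\Phi$ commutes with.'' This presupposes that $\Phi$ centralizes $\tau(I_{L/K})$, which is false in general: relation (3) says $\Ad(\Phi)\tau(\gamma)=\tau(\phi\gamma\phi^{-1})$, so $\Phi$ normalizes the image of $\tau$ but twists it by the Frobenius conjugation. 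Consequently the assertion $\Phi_s\in Z_G(\tau)$ is wrong (and in fact incompatible with $\Phi_s$ lying in the fiber, which is a coset of $Z_G(\tau)$ in $G$, not $Z_G(\tau)$ itself); what you actually need, and what does hold, is only $\Phi_u\in Z_G(\tau)$, but that requires an argument you have not given.

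The paper supplies it as follows: since conjugation by $\phi$ on the finite group $I_{L/K}$ has finite order, some power $\Phi^n$ lies in $Z_G(\tau)(\overline{E})$; compatibility of Jordan decomposition with the inclusion $Z_G(\tau)\subset G$ gives $\Phi_u^n\in Z_G(\tau)$; writing $\Phi_u^n=\exp(nY)$ with $Y\in\Lie Z_G(\tau)$ and using uniqueness of unipotent roots in characteristic zero yields $\Phi_u=\exp(Y)\in Z_G(\tau)$. Then $t\mapsto \Phi\exp(-tY)=\Phi_s\exp((1-t)Y)$ stays in the fiber (torsor under $Z_G(\tau)$) and connects $\Phi$ to $\Phi_s$, which in particular shows $\Phi_s$ satisfies relation (3). (An alternative repair: $\Phi_u$ does lie in the Zariski closure of $\langle\Phi\rangle$, hence normalizes the finite subgroup $\tau(I_{L/K})$; a unipotent element lies in the identity component of this normalizer, and a connected group normalizing a finite subgroup centralizes it. Either way, some such argument must replace the claim that $\Phi$ commutes with $\tau$.) With $\Phi_u\in Z_G(\tau)^\circ$ established, the rest of your proposal goes through as in the paper.
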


\begin{proof}
Consider the Jordan decomposition $\Phi = \Phi_s \Phi_u$ where $\Phi_s \in G(\overline{E})$ is semi-simple and $\Phi_u \in G(\overline{E})$ is unipotent.  The relation $\Ad \Phi (\tau(\gamma)) = \tau(\phi \gamma \phi^{-1})$ for $\gamma \in I_{L/K}$ implies that there is an integer $n$ such that $\Phi^n \in Z_G(\tau)(\overline{E})$.  As $\Phi_s$ and $\Phi_u$ commute, we see that $\Phi^n = \Phi_s^n \Phi_u^n$.  This is also the Jordan decomposition for $\Phi^n$ in $Z_G(\tau)$.  Since Jordan decomposition is compatible with inclusions of groups, we see $\Phi_u^n \in Z_G(\tau)(\overline{E})$.  As $\Phi_u^n$ is unipotent, we may write it as $\Phi_u^n = \exp ( n Y)$ for a nilpotent $Y \in \Lie Z_G(\tau)$ (for any unipotent group $U$ in characteristic zero, there is an isomorphism of schemes $\exp \colon \mathrm{Lie}(U) \to U$ induced by embedding $\mathrm{Lie}(U)$ into some Lie algebra of strictly upper-triangular nilpotent matrices, and then applying the usual power series of the exponential; if $U$ is commutative, $\exp$ is moreover an isomorphism of group schemes).  This shows that $\Phi_u = \exp(Y)$ lies in $Z_G(\tau)$.  As the fiber is a $Z_G(\tau)$-torsor, we see that $\Phi_s \exp(t Y)$ lies in the fiber for every $t$, and hence that $\Phi$ and $\Phi_s$ are in the same component.

The second statement is Lemma~\ref{lem:cocharlem} applied to $Z_G(\tau) \subset G$; note that $Z_G(\tau)$ is reductive by repeatedly applying the fact that for a reductive group $H$ and a semisimple element $h \in H$, $Z_H(h)$ is reductive \cite[Theorem 2.2]{humphreys95}.
\end{proof}

We will conclude the proof of Case 1 by showing that for a generic choice of $t$, the dimension of the $q^{-1}$-eigenspace of $\Ad (\Phi_s \lambda(t))$ on $\g^{I_{L/K}}$ is less than the dimension of the $q^{-1}$-eigenspace of $\Ad \Phi_s$.  
This will suffice, as $\Phi$, $\Phi_s$, and $\Phi_s \lambda(t)$ all lie in the same component of $X_{0, \tau}$, and the $q^{-1}$-generalized eigenspace for $\Ad \Phi$ has the same dimension as the $q^{-1}$-eigenspace for $\Ad \Phi_s$.  

We decompose 
\[
 \g^{I_{L/K}} = \bigoplus_{\mu} \g^{I_{L/K}}(\mu)
\]
where $\g^{I_{L/K}}(\mu)$ is the $\mu$-eigenspace of $\Ad \Phi_s$.  Now $\Ad \lambda(t)$ preserves $\g^{I_{L/K}}(\mu)$ as $\Ad \Phi_s$ and $\Ad \lambda(t)$ commute.  On the finitely-many non-zero $\g^{I_{L/K}}(\mu)$ with $\mu \neq q^{-1}$, the condition that $\Ad \Phi_s \lambda(t)$ not have $q^{-1}$ as an eigenvalue is simply the condition that $\Ad \lambda(t)$ not have $q^{-1} \mu^{-1}$ as an eigenvalue.  This is a non-empty, open condition (consider $t=1$).  Furthermore, we compute that
\[
 \Ad (\Phi_s \lambda(t)) N' = q^{-1} t^2 N'.
\]
So if $t \neq \pm 1$, we see that the eigenvalue for $N'$ is not $q^{-1}$.  Thus for a generic choice of $t$, $\Ad (\Phi_s \lambda(t))$ has a smaller $q^{-1}$-eigenspace than $\Ad (\Phi_s)$, and we conclude by induction.

\textbf{Case 2}:  The case $N \neq 0$ follows the same strategy, but is more involved.  
We will first find a semi-simple point in a given component of the fiber, and then modify it using cocharacters valued in $Z_G(N) \cap Z_G(\tau)$ so it is unobstructed.  A key technique is passing between points in the fiber and points of $Z_G(N)$: for any cocharacter $\lambda$ adapted to $N$ and $\Phi$ in the fiber, we see $\Phi \lambda(q^{1/2}) \in Z_G(N)$ as 
\[
 \Ad (\Phi \lambda(q^{1/2})) N = q^{-1} (q^{1/2})^2 N = N
\]

\begin{lem}\label{generalss}
In each non-empty component of the fiber $X_{N,\tau}$ of $X_\tau \to Y_\tau$ above $N$, there exists a semi-simple point $\Phi$. There is a cocharacter $\lambda : \Gm \to Z_G(\tau)^\circ$ adapted to $N$ such that for all $t$
\[
 \Ad \Phi \Ad \lambda(t) = \Ad \lambda(t) \Ad \Phi.
\]
\end{lem}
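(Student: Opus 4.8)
The plan is to follow the structure of the proof in Case~1, with $Z_G(\tau)$ replaced by $Z_G(N)\cap Z_G(\tau)$ in the relevant places, and then to appeal to Lemma~\ref{lem:cocharlem}. Write the Jordan decomposition $\Phi=\Phi_s\Phi_u$ in $G(\overline{E})$, with $\Phi_s$ semi-simple, $\Phi_u$ unipotent, and the two commuting. The first step is to check that $\Phi_u\in Z_G(N)\cap Z_G(\tau)$. That $\Phi_u\in Z_G(\tau)$ follows exactly as in Case~1: relation~(3) of \S\ref{sec:gwd} forces $\Phi^n\in Z_G(\tau)(\overline{E})$ for some $n\ge 1$, hence $\Phi_u^n=(\Phi^n)_u\in Z_G(\tau)$ by compatibility of Jordan decomposition with the closed subgroup $Z_G(\tau)$, and then $\Phi_u=\exp(Y)\in Z_G(\tau)$ on taking logarithms (we are in characteristic zero). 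That $\Phi_u\in Z_G(N)$ is the new point: applying $\Ad$ to $\Phi=\Phi_s\Phi_u$ yields the Jordan decomposition $\Ad\Phi=\Ad(\Phi_s)\cdot\Ad(\Phi_u)$ in $\GL(\g)$, and since relation~(2) exhibits $N$ as an eigenvector of $\Ad\Phi$ with eigenvalue $q^{-1}$, the semi-simple part satisfies $\Ad(\Phi_s)(N)=q^{-1}N$ while the unipotent part satisfies $\Ad(\Phi_u)(N)=N$, i.e. $\Phi_u\in Z_G(N)$.

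Granting this, $\Phi_s$ again satisfies relations~(1)--(3) of \S\ref{sec:gwd} with the same $N$ and $\tau$: relation~(2) holds because $\Ad(\Phi_s)(N)=q^{-1}N$, and relation~(3) holds because $\Phi_u$ centralizes $\tau(I_{L/K})$, so $\Ad(\Phi_s)(\tau(\gamma))=\Ad(\Phi)(\tau(\gamma))=\tau(\phi\gamma\phi^{-1})$. Thus $\Phi_s\in X_{N,\tau}(\overline{E})$, and it is semi-simple. To see it lies in the same component as $\Phi$: the logarithm $Y$ of the unipotent element $\Phi_u$ lies in $\Lie(Z_G(N)\cap Z_G(\tau))$, so $t\mapsto \exp(tY)$ is a copy of $\Ga$ in $Z_G(N)\cap Z_G(\tau)$; as $X_{N,\tau}$ is a torsor under this group, $t\mapsto \Phi_s\exp(tY)$ is an $\AA^1$ inside $X_{N,\tau}$ joining $\Phi_s$ (at $t=0$) to $\Phi=\Phi_s\Phi_u$ (at $t=1$), and on the fiber irreducible components coincide with connected components. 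Replacing $\Phi$ by $\Phi_s$, we may therefore assume $\Phi$ is semi-simple, which proves the first assertion.

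For the cocharacter, I would apply Lemma~\ref{lem:cocharlem} with $H=Z_G(\tau)$ (reductive, since $\tau$ has finite image), $g=\Phi$, $e=N$, and $\alpha=q^{-1}$. The hypotheses hold: $N\in\Lie Z_G(\tau)$ by relation~(1), $\Ad\Phi$ preserves $\Lie Z_G(\tau)$ because by relation~(3) it conjugates $\tau(I_{L/K})$ onto itself, and $\Ad(\Phi)(N)=q^{-1}N$. Lemma~\ref{lem:cocharlem} then produces a cocharacter $\lambda\colon\Gm\to Z_G(\tau)^\circ$ adapted to $N$ with $\Ad\Phi\,\Ad\lambda(t)=\Ad\lambda(t)\,\Ad\Phi$ for all $t$, as desired.

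The one genuinely non-routine step is the first one---checking $\Phi_u\in Z_G(N)\cap Z_G(\tau)$, and with it that passing from $\Phi$ to $\Phi_s$ neither leaves the fiber nor changes its component; the subtlety compared with Case~1 is that $\Phi$ itself does not lie in $Z_G(N)$, so one has to go through the eigenvector observation about the Jordan decomposition of $\Ad\Phi$ rather than argue directly inside $Z_G(N)$. Once that is in place, the existence of $\lambda$ is immediate from Lemma~\ref{lem:cocharlem}, exactly as in the second half of the preceding lemma.
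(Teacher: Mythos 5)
Your proof is correct, but it takes a genuinely different route from the paper's. The paper does not take the Jordan decomposition of the given point $\Phi'$ itself: it first picks an auxiliary cocharacter $\lambda'\colon \Gm \to Z_G(\tau)$ adapted to $N$, moves into $Z_G(N)$ by setting $\Psi=\Phi'\lambda'(q^{1/2})$, takes the Jordan decomposition $\Psi=\Psi_u\Psi_s$ there, shows $\Psi_u\in Z_G(N)\cap Z_G(\tau)$ by the same power-plus-logarithm argument you use, and only then invokes Lemma~\ref{lem:cocharlem} to replace $\lambda'$ by a cocharacter $\lambda$ commuting with $\Ad\Psi_s$, defining $\Phi=\Psi_s\lambda(q^{-1/2})$ and checking via the interpolating family $\Phi_t=\Psi_s\lambda(q^{-1/2}t)\lambda'(t^{-1})$ that this does not leave the component. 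Your argument short-circuits all of this: the key observation that an eigenvector of $\Ad\Phi$ is an eigenvector of $\Ad\Phi_s$ with the same eigenvalue and is fixed by $\Ad\Phi_u$ (valid because $\Ad\Phi_s$ and $\Ad\Phi_u$ are the semisimple and unipotent parts of $\Ad\Phi$, hence act as expected on the $q^{-1}$-eigenspace) shows directly that $\Phi_u\in Z_G(N)\cap Z_G(\tau)$ and that $\Phi_s$ lies in $X_{N,\tau}$, so the Case~1 argument goes through verbatim with $Z_G(N)\cap Z_G(\tau)$ in place of $Z_G(\tau)$, with no auxiliary cocharacter and no interpolation. A further small gain is that your point $\Phi=\Phi_s$ is semisimple as an element of $G$, whereas the paper's $\Phi=\Psi_s\lambda(q^{-1/2})$ is a product of elements only shown to commute in the adjoint representation, so it is only $\Ad\Phi$ that is immediately seen to be semisimple; what the paper's route buys is that it works throughout with the element $\Psi=\Phi\lambda(q^{1/2})\in Z_G(N)$, which is the object reused in the remainder of Case~2. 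Both proofs rest on the same underlying inputs: $\Phi^n\in Z_G(\tau)$ from relation~(3), exponential and logarithm of unipotents in characteristic zero, reductivity of $Z_G(\tau)$, and Lemma~\ref{lem:cocharlem} applied with $H=Z_G(\tau)$.
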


\begin{proof}
Let $\Phi'$ be a point in the desired component of the fiber.  Let $\lambda' : \Gm \to Z_G(\tau)$ be any cocharacter adapted to $N$.  It is easy to check that $\Psi := \Phi' \lambda'(q^{1/2}) \in Z_G(N)(\overline{E})$ satisfies $\Ad(\Psi) \tau(\gamma) = \tau(\phi \gamma \phi^{-1})$
for $\gamma \in I_{L/K}$.  As before, some power $\Psi^n$ is in $Z_G(N) \cap Z_G(\tau)(\overline{E})$. 

Consider the Jordan decomposition $\Psi = \Psi_u \Psi_s$ in $Z_G(N)$ with $\Psi_u$ and $\Psi_s$ commuting unipotent and semi-simple elements.  As $\Psi^n = \Psi_u^n \Psi_s^n$ is a Jordan decomposition for $\Psi^n$ and Jordan decomposition is compatible with inclusions of groups, we see that $\Psi_u^n \in Z_G(N) \cap Z_G(\tau) (\overline{E})$.  As $\Psi_u$ is unipotent, it follows that $\Psi_u \in Z_G(N) \cap Z_G(\tau) (\overline{E})$.  This shows that
\[
 \Ad(\Psi_s \lambda'(q^{-1/2})) N = q^{-1} N \quad \quad \text{and} \quad \quad \Ad( \Psi_s \lambda'(q^{-1/2})) \tau(\gamma) = \tau(\phi \gamma \phi^{-1})
\]
for $\gamma \in I_{L/K}$.  Note that $\Phi'$ and$\Psi_s \lambda'(q^{-1/2})= \Phi' \lambda'(q^{1/2})\Psi_u^{-1}\lambda'(q^{-1/2})$ lie in the same component of the fiber.

The above used an arbitrary cocharacter adapted to $N$.  Using Lemma~\ref{lem:cocharlem}, now pick a cocharacter $\lambda \colon \Gm \to Z_G(\tau)$ adapted to $N$ whose adjoint action commutes with that of $\Psi_s$.  Define $\Phi = \Psi_s \lambda(q^{-1/2})$.  As before, we check that $\Phi$ is in $X_{N, \tau}(\overline{E})$.  Furthermore, note that $\Phi$ and $\Psi_s \lambda'(q^{-1/2})$ lie in the same component of the fiber, as the family
\[
 \Phi_t:= \Psi_s \lambda(q^{-1/2} t) \lambda'( t^{-1})
\]
interpolates between them.  Finally, $\Phi$ is semi-simple as the adjoint actions of $\Psi_s$ and $\lambda(q^{-1/2})$ commute and are semi-simple, and it is clear that the adjoint actions of $\Phi$ and $\lambda(t)$ commute.
\end{proof}

Recall that a point $(\Phi',N)$ is unobstructed if 
\[
q \Ad \Phi' - 1 \oplus \ad N : \g^{I_{L/K}} \oplus \g^{I_{L/K}} \to \g^{I_{L/K}}
\]
is surjective.  If $\Phi'$ is semi-simple, then so is $q \Ad \Phi' - 1$.  Hence $\g^{I_{L/K}}$ is a direct sum of the kernel and the image of $q \Ad \Phi' - 1$.  In this case, $(\Phi',N)$ is unobstructed provided that $\ker( q \Ad \Phi' - 1 ) \subset \Im(\ad N)$. We now continue with the $\Phi$ and $\lambda$ produced by Lemma \ref{generalss}.  

We decompose
\[
 \g^{I_{L/K}} = \sum_{n \in \ZZ} \g_n
\]
where $\g_n$ is the space where $\Ad \lambda(t)$ acts by $t^n$.  For $n \leq 0$, we set $\g_n^{\lw} := \ker (\ad (N) |_{\g_n}^{-n+1})$ and $\g'_n = \Im (\ad(N) |_{\g_{n-2}})$, and decompose
\[
 \g_n = \g_n' \oplus \g_n^{\lw}.
\]
Here $\g_n^{\lw}$ are the lowest weight vectors of the $\sl_2$-triple containing $N$ that was used to define $\lambda$.  Note $\ad N$ gives an isomorphism between $\g_{n-2}$ and $\g'_n$.

\begin{lem}
For any $n \leq 0$, we have $\Ad \Phi (\g_n^{\lw}) = \g_n^{\lw}$ and $\Ad \Phi( \g_n') = \g_n'$.
\end{lem}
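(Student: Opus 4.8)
The plan is to exploit two properties of the semisimple element $\Phi = \Psi_s \lambda(q^{-1/2})$ furnished by Lemma~\ref{generalss}: that $\Ad \Phi$ commutes with $\Ad \lambda(t)$ for all $t$, and that $\Ad \Phi(N) = q^{-1} N$ (which holds because $\Phi \in X_{N,\tau}(\overline{E})$). From the first property, $\Ad \Phi$ preserves every weight space $\g_n$ of $\Ad \lambda(t)$; it also preserves $\g^{I_{L/K}}$, since $\Ad \Phi(\tau(\gamma)) = \tau(\phi \gamma \phi^{-1})$ and $\phi$ normalizes $I_{L/K}$. Being invertible and grading-preserving, $\Ad\Phi$ restricts to an automorphism of each $\g_n$. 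From the second property, for $v \in \g^{I_{L/K}}$ we have $\Ad \Phi(\ad(N) v) = [\Ad\Phi(N), \Ad\Phi(v)] = q^{-1}\,\ad(N)(\Ad\Phi(v))$, i.e.
\[
 \Ad \Phi \circ \ad(N) = q^{-1}\, \ad(N) \circ \Ad \Phi
\]
as operators on $\g^{I_{L/K}}$.

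For $\g_n' = \ad(N)(\g_{n-2})$, I would compute, using $\Ad\Phi(\g_{n-2}) = \g_{n-2}$ and that scaling by $q^{-1}$ does not alter a subspace,
\[
 \Ad \Phi(\g_n') = \Ad\Phi\bigl(\ad(N)(\g_{n-2})\bigr) = q^{-1}\,\ad(N)\bigl(\Ad\Phi(\g_{n-2})\bigr) = \ad(N)(\g_{n-2}) = \g_n'.
\]
For $\g_n^{\lw} = \ker\bigl(\ad(N)^{-n+1}|_{\g_n}\bigr)$, iterating the intertwining relation $-n+1$ times gives $\ad(N)^{-n+1} \circ \Ad\Phi = q^{-n+1}\, \Ad\Phi \circ \ad(N)^{-n+1}$, so whenever $v \in \g_n$ satisfies $\ad(N)^{-n+1}(v) = 0$ we also get $\ad(N)^{-n+1}(\Ad\Phi(v)) = 0$; since $\Ad\Phi(v) \in \g_n$, this yields $\Ad\Phi(\g_n^{\lw}) \subseteq \g_n^{\lw}$, and equality follows because $\Ad\Phi$ is an injective endomorphism of the finite-dimensional space $\g_n^{\lw}$.

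I do not anticipate a serious obstacle: the statement reduces entirely to the two commutation relations above. The only step needing a word of care is passing from the inclusion $\Ad\Phi(\g_n^{\lw}) \subseteq \g_n^{\lw}$ to an equality, which one gets either from finite-dimensionality as above, or by running the same argument with $\Phi^{-1}$ (note $\Ad\Phi^{-1}(N) = qN$) to obtain the reverse inclusion.
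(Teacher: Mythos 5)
Your proof is correct and follows essentially the same route as the paper: commutation of $\Ad\Phi$ with $\Ad\lambda(t)$ to preserve each $\g_n$, and the relation $\Ad\Phi(N)=q^{-1}N$ to intertwine $\Ad\Phi$ with $\ad N$, giving the two invariance statements. Your explicit treatment of the passage from inclusion to equality (via invertibility on finite-dimensional spaces) is a minor point the paper leaves implicit, but the argument is the same.
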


\begin{proof}
First note that $\Ad(\Phi)$ preserves $\g_n$ since $\Ad(\Phi)$ and $\Ad(\lambda(t))$ commute. For $v \in \g_n^{\lw}$, we compute that
\[
 (\ad N)^{-n+1} \left( \Ad (\Phi) v \right) = q^{-n+1} \Ad \Phi\left( ( \ad N)^{-n+1} v \right) = 0.
\]
Since $\Phi$ acts invertibly ($\Ad \Phi^{-1}$ is an inverse), this gives the first equality.
For the second, consider $v \in \g_n'$.  Writing $v = \ad N (v')$ for $v' \in \g_{n-2}$, we compute that
\[
 \Ad \Phi(v) = [\Ad \Phi( N), \Ad \Phi( v')] = q^{-1} \ad N ( \Ad \Phi(v')).
\]
Thus $\Ad \Phi (v) \in \ad N (\g_{n-2}) = \g'_n$.  Since $\Phi$ acts invertibly, we are done.
\end{proof}

Note that all $\g_n$ with $n>0$ lie in the image of $\ad N$, as do $\g'_n$ for $n \leq 0$.  To check that $\ker( q \Ad \Phi' - 1 ) \subset \Im(\ad N)$, by the Lemma it suffices to show that $\Ad \Phi'$ does not have eigenvalue $q^{-1}$ on any of the $\g_n^{\lw}$.  We will modify $\Phi$ so this holds.

Consider the element $\Psi_s= \Phi \lambda(q^{1/2})$. There are two cases to consider, depending on whether $\Ad(\Psi_s)$ has infinite or finite order. First suppose it has finite order, say $m$, so that $\Ad(\Phi)^m= \Ad(\lambda(q^{1/2}))^{-m}$. Consider any eigenvector $v \in \mathfrak{g}^{\lw}_n$ (for some $n \leq 0$) of $\Ad(\Phi)$, with eigenvalue $\alpha$. Then
\[
\alpha^m v =\Ad(\Phi)^m (v)= \Ad(\lambda(q^{-m/2}))v= q^{-mn/2} v,
\]
and we clearly cannot have $\alpha=q^{-1}$ for $n \leq 0$.

Next consider the case where $\Ad(\Psi_s)$ has infinite order. The group
\[
\mathcal{Z}= \{g \in Z_G(N) \cap Z_G(\tau): \text{$\Ad(g)$ and $\Ad(\Phi)$ commute}\}
\]
contains a non-trivial power of the semi-simple, infinite order, element $\Psi_s$ (since $\Ad \Phi^m \tau(\gamma) = \tau(\gamma)$ when conjugation by $\phi^m$ is trivial on $I_K$).
Some power $\Psi_s^m$, with $m>0$, is then contained in a non-trivial torus of $\mathcal{Z}$, and we let $\lambda''$ be any co-character of (this torus of) $\mathcal{Z}$ whose image contains $\Psi_s^m$: certainly in any torus every element is in the image of some co-character.
We consider the adjoint action of $\Phi_t := \Phi \lambda''(t)$ and claim that for a generic choice of $t$, the point $\Phi_t$ will be an unobstructed point of the fiber of $X_\tau \to Y_\tau$ over $N$.

As the fiber is a $Z_G(N) \cap Z_G(\tau)$-torsor, $\Phi_t$ lies in the desired fiber.  Since $\Ad \Phi$ and $\Ad \lambda''(t)$ commute, they have common eigenvectors.  Let $v$ be an eigenvector of $\Ad \Phi$ in one of the $\g_n^{\lw}$. 
\begin{itemize}
 \item In the case that $\Ad \lambda''(t) v \neq v$ for some $t$, it is a non-empty open condition for $\Ad \Phi_t (v) \neq q^{-1} v$. 
 
 \item  If $\Ad \lambda''(t) v = v$ for all $t$, then as $\Psi_s$ is in the image of $\lambda''$ we see that
\[
v =  \Ad \Psi_s (v) = \Ad \Phi \lambda(q^{1/2}) (v) = q^{n /2} \Ad \Phi (v) .
\]
In particular, $\Ad \Phi (v) \neq q^{-1} v$ as $n /2 \leq 0$, and hence $\Ad \Phi_t (v) \neq q^{-1}v$ for all $t$.  
\end{itemize}
Combining these conditions for each eigenvector in some $\g_n^{\lw}$, for a generic choice of $t$ we see $\Ad \Phi_t$ does not have eigenvalue $q^{-1}$ on any of the $\g_n^{\lw}$.  In that case $\Phi_t$ is an unobstructed point of the $X_{N,\tau}$, completing the case that $N \neq 0$.
\end{proof}

\section{Analysis of Local Galois Deformation Rings}\label{sec:def}

As before, let $\ell$ and $p$ be distinct primes.  Let $K$ be an $\ell$-adic field. In this section we will apply the local monodromy theorem, so we fix a compatible collection of $p$-power roots of unity in $\overline{K}$, yielding as usual a surjection $t_p : I_K^t \onto \ZZ_p(1)$ from the tame inertia group of $K$.  Let $E$ be a finite extension of $\QQ_p$ with ring of integers $\O_E$ and residue field $k$ of size $q = \ell^f$. Let $\mathcal{C}_{\O_E}$ denote the category of complete local noetherian $\O_E$-algebras with residue field $k$. Let $G$ be a smooth group scheme over $\O_E$ such that $G_E$ is reductive.  Fix a continuous homomorphism $\rhobar : \Gal(\overline{K}/K) \to G(k)$. Consider the morphism $D^\square_{\rhobar} \to D_{\rhobar}$ of (categories cofibered in) groupoids over $\Ccal_{\O_E}$,
where $D^\square_{\rhobar}(R)$ is the category (set) of lifts of $\rhobar$ to $G(R)$, and $D_{\rhobar}(R)$ is the category whose objects are lifts of $\rhobar$, and where a morphism between lifts $\rho$ and $\rho'$ is an element $g \in \widehat{G}(R)$ such that $g \rho g^{-1}= \rho'$. We are interested in the generic fiber $R_{\rhobar}^{\square}[\frac{1}{p}]$ of the universal lifting ring $R_{\rhobar}^{\square}$ (representing $D^\square_{\rhobar}$); when the corresponding deformation functor for $\rhobar$ is also representable, we obtain analogous results for $R_{\rhobar}[\frac{1}{p}]$.  We will analyze $R_{\rhobar}^{\square}[\frac{1}{p}]$ by means of $G$-Weil-Deligne representations.

Let $A^\circ$ be a complete local noetherian $\mathcal{O}_E$-algebra that has no $p$-torsion, with generic fiber $A= A^\circ[\frac{1}{p}]$, and with a continuous homomorphism $\rho \colon \Gamma_K \to G(A^\circ)$.  We can associate a $G$-Weil-Deligne representation $D_A$ to $\rho$ using the following construction (compare \cite[Proposition 4.1.6]{emerton-helm}).  

For each finite dimensional $E$-linear representation $M$ of $G$, we obtain a representation of $\Gamma_K$ on $M_{A^\circ}$.  Let $\m$ be the maximal ideal of $A^\circ$.  Let $e$ be the $\m$-adic valuation of $p$, and fix an integer $j$ such that $j > \frac{e}{p-1}$.
Since $\rho$ is continuous and $M_{A^\circ} / \m^j M_{A^\circ}$ is discrete in the $\m$-adic topology, there is a compact open normal subgroup $H_{M} \subset I_K$ that acts trivially on $M_{A^\circ} / \m^j M_{A^\circ}$.  Doing so for a faithful representation, we may find a common $H$ that works for every choice of $M$ and pick an element $\alpha \in H$. Note the kernel of the reduction map $\Aut(M_{A^\circ / \m^i}) \to \Aut(M_{A^\circ/\m^j}) $ is a $p$-group for $i>j$, so the action of $H$ factors through the chosen $t_p : I_K^t \to \ZZ_p(1)$.

Now consider the natural representation $\rho_M : \Gamma_K \to \Aut(M_{A})$ over $A$. 
The denominators in the power series for $\log$ exist in $A$ (since $p$ is inverted), and the power series for $\log( \rho_M(g))$ for $g \in H$
converges in the $\m$-adic topology on $\End(M_{A})$ since $\rho_M(g) \equiv \Id \pmod{\m^j}$.  Furthermore, we see that
$\exp( \log(\rho_M(g)))$ exists and equals $\rho_M(g)$ for $g \in H$ since the power series for the exponential converges.  (The $\m$-adic valuation of $\log(\rho_M(g))$ is greater than $\frac{e}{p-1}$ and the valuation of $n!$ is at most $\frac{e \cdot n}{p-1}$.)

We set $N_M = \frac{1}{t_p(\alpha)}\log( \rho_M(\alpha))$ and define $r_M: W_K \to \Aut(M_{A})$ by
\[
r_M ( \phi^n \sigma) = \rho_M(\phi^n \sigma) \exp(-t_p(\sigma) N_M)
\]
where $\phi$ is Frobenius and $\sigma \in I_K$.  Note that $\exp(-t_p(\sigma) N)$ exists since it can be rewritten as $\exp( \log( \rho_M(\alpha^{-t_p(\sigma)})))$ and $\alpha^{-t_p(\sigma)}\in H$.  We see that $r_M$ is trivial on $H$.  Furthermore, for $g= \phi^n \sigma \in W_K$ we compute that
\[
\Ad( r_M(g)) N_M= \frac{1}{t_p(\alpha)} \log(\rho_M( g \alpha g^{-1})) = \frac{1}{t_p(\alpha)} \log(\rho_M(\phi^n \sigma \alpha \sigma^{-1} \phi^{-n} ))
\]
since $\exp(N_M)$ commutes with $N_M$.  Now $\rho_M(\sigma \alpha \sigma^{-1}) = \rho_M(\alpha)$ as $\rho_M$ factors through the abelian $\ZZ_p(1)$.  Furthermore, conjugation by $\phi$ is multiplication by $q^{-1}$ on $\ZZ_p(1)$.  This shows that
$$\Ad(r_M(g)) N_M= \|g \| N_M.$$ 
A similar calculation shows that $r_M$ is a homomorphism.

The data of $r_M$ and $N_M$ for every finite dimensional $E$-linear representation $M$ of $G$ gives us a homomorphism $r : W_K \to G(A)$ that is trivial on $H$ and an $N \in (\Lie G)(A)$ by a Tannakian argument as in \cite[Appendix A]{bellovin16}.  In particular, \cite[A.2.4]{bellovin16} discusses how to deal with $N$, while Section A.2.6 discusses how to deal with the representation of a group.
Furthermore, we have that
\[
\Ad r(g) N = \|g\| N,
\]
as we have checked it on each representation $M$.  In other words, we have a $G$-Weil-Deligne representation over $A$ (with the $G$-bundle canonically trivialized).

\begin{remark}\label{GWD}
Suppose $G = \GL_n$, $E'$ is a finite extension of the field $E$, and $x$ is an $E'$-valued point of $A$.  Specializing $D_A$ at $x$ gives the Weil-Deligne representation associated to $\rho_x$ using the standard construction.
\end{remark}

For the remainder of this section, fix a finite extension $L/K$ and an homomorphism $\tau \colon I_{L/K} \to G(E)$ that arises as the restriction to $I_K$ of a $G$-Weil-Deligne representation. We call a $G(\overline{E})$-conjugacy class of such $\tau$ an \textit{inertial type}. For any artin local $E$-algebra $B$, with residue field some finite extension $E'$ of $E$, we say a continuous homomorphism $\rho \colon \Gamma_K \to G(B)$ is type $\tau$ if the associated $G$-Weil-Deligne representation $(r, N)$ has inertial restriction with reduction $$I_{L/K} \xrightarrow{r} G(B) \to G(E')$$ being $G(\overline{E})$-conjugate to $\tau$.  This condition is equivalent to $\tau \colon I_{L/K} \to G(B)$ being $G(B \otimes_{E'} \overline{E})$-conjugate to $\tau$:  this follows from standard deformation theory and the fact that $H^i(I_{L/K}, \ad(\rho))=0$ for $i=1, 2$. (We extend the definition of type to the case where $B$ is any finite $E$-algebra by imposing the above condition on each local factor of this artin ring.)

For any complete local noetherian $\mathcal{O}_E$-algebra $A^\circ$, with generic fiber $A= A^\circ[\frac{1}{p}]$, and equipped with a continuous homomorphism $\rho \colon \Gamma_K \to G(A^\circ)$, there is a quotient $A \onto A^\tau$, equal to a union of irreducible components of $\Spec A$, such that for any finite $E$-algebra $B$, an $E$-algebra map $A \xrightarrow{f} B$ factors through $A^\tau$ if and only if $f\circ \rho$ has inertial type $\tau$ (see the proof of \cite[Proposition 3.0.12]{balaji}). In particular, we can form the quotient $(R_{\rhobar}^\square[\frac{1}{p}])^\tau$. We remark here that with a slight addition to the argument of \cite[Proposition 3.0.12]{balaji}, we could equally well carry out the preceding discussion (and subsequent analysis) with inertial types defined to be $G^0(\overline{E})$-conjugacy classes rather than $G(\overline{E})$-conjugacy classes; we omit the details, but note that this would yield a slightly more refined result.

Our main result is the following:

\begin{thm}
$\Spec (R_{\rhobar}^{\square}[\frac{1}{p}])^\tau$ admits a regular, dense open subscheme, and it is equidimensional of dimension $\dim(G)$.
\end{thm}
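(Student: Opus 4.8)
Write $R := R_{\rhobar}^\square[\tfrac{1}{p}]$ and $R^\tau := (R_{\rhobar}^\square[\tfrac{1}{p}])^\tau$. The plan is to transport the unobstructedness statement of Theorem~\ref{thm:unobstructed} from the moduli space $X$ to $\Spec R^\tau$ by means of Grothendieck's local monodromy theorem. First I would set up the bookkeeping: since $R_{\rhobar}^\square$ is complete local noetherian over $\O_E$, the ring $R^\tau$ is noetherian, Jacobson and excellent, so its regular locus is open, regularity is detected on completed local rings at closed points, and closed points are dense. It therefore suffices to produce a nonempty open $U \subseteq \Spec R^\tau$ such that the local ring $R^\tau_x$ is regular of dimension $\dim G$ for every closed point $x \in U$: then $U$ lies in the regular locus, and, being open and (once density is shown) dense, it contains the generic point of every irreducible component, whence $\Spec R^\tau$ is equidimensional of dimension $\dim G$.

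Next I would carry out the comparison with Weil--Deligne representations. Let $x \in \Spec R^\tau$ be a closed point with residue field a finite extension $E'/E$, corresponding to a lift $\rho \colon \Gamma_K \to G(E')$ of type $\tau$, and let $D_x$ be its associated $G$-Weil--Deligne representation; as $\rho$ has type $\tau$ its inertial action factors through $I_{L/K}$, so $D_x$ is an object of $\WDL(E')$. The completed local ring $\widehat{R^\tau_x}$ pro-represents the functor of lifts of $\rho$ to artin local $E'$-algebras; because $H^i(I_{L/K}, \ad \rho) = 0$ for $i \geq 1$ (finite group cohomology in characteristic zero), every such lift is automatically of type $\tau$, so this functor agrees with the one pro-represented by $\widehat{R_x}$. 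By Grothendieck's local monodromy theorem, valid since $\ell \neq p$, lifts of $\rho$ correspond functorially, and compatibly with trivializations of the underlying $G$-bundle, to lifts of $D_x$ as an object of $\WDL$; hence $\widehat{R^\tau_x}$ is canonically isomorphic to the completed local ring of $X$ at $[D_x]$. The association $\rho \mapsto D_\rho$ thus defines a morphism $g \colon \Spec R^\tau \to X$ inducing isomorphisms on completed local rings at all closed points, and under it the coherent obstruction sheaf on $\Spec R^\tau$ (the cokernel of \eqref{eq:hcaldef} for the universal Weil--Deligne representation) is identified with $g^*\Hcal$; in particular $x$ is unobstructed precisely when $\Hcal_{g(x)} = 0$.

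Then I would conclude. At an unobstructed point the deformation theory of $D_x$ in $\WDL$ is formally smooth, so $\widehat{R^\tau_x}$ is a power series ring over $E'$ of dimension $\dim T_{[D_x]} X = \dim_{E'} H^1(D_x) + \dim G - \dim_{E'} H^0(D_x)$, the last two terms accounting for the passage from the groupoid $\WDL$ to the scheme $X$; since the total complex $C^\bullet(D_x)$ has Euler characteristic zero and $H^2(D_x) = 0$, this dimension equals $\dim G$. Hence $\Spec R^\tau$ is regular of dimension $\dim G$ at every unobstructed closed point, and it remains to check that the unobstructed locus $U$, which is open by the semicontinuity already noted for $\Hcal$, is dense. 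For this I would argue that, because $g$ induces isomorphisms on completed local rings at closed points, each irreducible component of $\Spec R^\tau$ has the same local dimension at a general closed point as, and maps dominantly onto, a corresponding irreducible component $W$ of $X$; by Theorem~\ref{thm:unobstructed} the unobstructed locus is dense in $X$, so it meets $W$, so the given component of $\Spec R^\tau$ meets $U$. Thus $U$ is a regular dense open subscheme and $\Spec R^\tau$ is equidimensional of dimension $\dim G$.

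The main obstacle is the second paragraph --- pinning down the comparison morphism $g$. One must verify that, after the harmless rigidification of the inertial type supplied by $H^{\geq 1}(I_{L/K}, \ad\rho) = 0$, Grothendieck's equivalence identifies $\widehat{R^\tau_x}$ with $\widehat{\O}_{X, [D_x]}$ on the nose and matches the obstruction sheaves, and --- this is what actually powers the density argument --- that each irreducible component of $\Spec R^\tau$ dominates a component of $X$, so that the density of Theorem~\ref{thm:unobstructed} transports. The accompanying dimension count (the framing term $\dim G - \dim_{E'} H^0(D_x)$, and keeping track of the fact that deforming $\rho$ also deforms its inertial type) requires care, though no individual computation is hard.
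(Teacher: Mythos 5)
Your outline is sound and it reaches the theorem by a genuinely different mechanism than the paper, even though both routes reduce everything to Theorem~\ref{thm:unobstructed} and both exploit the vanishing Euler characteristic of $C^\bullet(D_x)$. The paper never identifies $\widehat{R^\tau_x}$ with a completed local ring of $X$: following \cite[Proposition 3.3.1]{kisin08} it shows only that $\Spec A_x \to \WD$ is \emph{formally smooth} as a map to the groupoid, then runs the argument of \cite[Proposition 3.1.6]{kisin08} to see that the unobstructed locus is dense in each $\Spec A_x$, and computes $\dim A_x$ on the Galois side as $\dim Z^1(\Gamma_K,\ad(\rho_x))$, using the local Euler characteristic formula together with the identifications $H^0(\Gamma_K,\ad(\rho_x))=H^0(D_x)$ and $H^1(\Gamma_K,\ad(\rho_x))=H^1(D_x)$. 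You instead assert the stronger statement that the classifying map $g\colon \Spec R^\tau \to X$ of $D_A$ matches completed local rings at closed points; granted this, your density transfer (each component of $\Spec R^\tau$ dominating a component of $X$) replaces the Kisin 3.1.6 argument, and your dimension count uses only the tangent space of $X$, with no Galois cohomology. The stronger claim is in fact true, and its proof is the artinian-level version of the dual-numbers dictionary the paper writes out (boundedness of artinian lifts of the bounded $D_x$, inertia automatically factoring through $I_{L/K}$ because $\ker(G(B)\to G(E_x))$ is torsion-free, continuity of the reconstructed Galois lift, and rigidity of the type from $H^1(I_{L/K},\ad(\rho_x))=0$), so your route is viable; what it buys is a more transparent density argument, at the price of carrying out that comparison for all artinian $B$ rather than only $E_x[\epsilon]$.

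Two steps need shoring up before this is a proof. First, ``isomorphism on completed local rings'' is not literally correct when the residue field $E_x$ is larger than the residue field of $g(x)$ in $X$; the honest statement is $\widehat{R^\tau_x}\cong \widehat{\O}_{X_{E_x},x'}$ for the canonical $E_x$-point $x'$ of $X\times_E E_x$, which still gives everything you use: the induced local map $\O_{X,g(x)}\to R^\tau_{\m_x}$ is faithfully flat, and vanishing of $g^*\Hcal$ at $x$ is equivalent to $\Hcal_{g(x)}=0$. Second, the assertion that every irreducible component $Z$ of $\Spec R^\tau$ maps dominantly onto a component of $X$ is stated as if immediate, but it is exactly the point that powers your density transfer, so spell it out: deduce flatness of $\O_{X,g(x)}\to R^\tau_{\m_x}$ from the completion statement, choose on $Z$ a closed point $x$ lying on no other component (possible since these rings are Jacobson with residue fields finite over $E$), and apply going-down to lift a generic point of $X$ specializing to $g(x)$ to a point of $Z$; then $\overline{g(Z)}$ is a component $W$ of $X$, the dense open unobstructed locus of Theorem~\ref{thm:unobstructed} contains the generic point of $W$, and its preimage meets $Z$. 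With those repairs the argument closes, and note that your conclusion at unobstructed points (that $\widehat{R^\tau_x}$ is a power series ring over $E_x$ of dimension $\dim G$) yields regularity of $U$ exactly as in the paper, which deliberately claims regularity rather than formal smoothness over $\QQ_p$.
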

\begin{proof}
Let $A= (R_{\rhobar}^{\square}[\frac{1}{p}])^\tau$, and let $A^\circ$ be the scheme-theoretic closure of $A$ in $R_{\rhobar}^{\square}$. The universal lift of $\rhobar$ induces a continuous homomorphism $\Gamma_K \to G(A^\circ)$, and so (note that $A^\circ$ is $p$-torsion free) we obtain a $G$-Weil-Deligne representation over $A$ by the construction preceding Remark \ref{GWD}. Denote it by $D_A \in WD_{G, L}(A)$. 
For any closed point $x$ of $\Spec A$, write $\m_x$ for the maximal ideal at $x$, $A_x$ for the completion of $A$ at $\m_x$, $E_x$ for the residue field, $\rho_x$ for the associated homomorphism $\Gamma_K \to G(E_x)$, and $D_x$ for the associated object of $WD_{G, L}(E_x)$. The argument of \cite[Proposition 3.3.1]{kisin08} implies that the associated morphism (of groupoids over $E$-algebras) 
\[
\Spec A_x \to WD_G
\]
is formally smooth; the hypothesis of \cite[Proposition 3.3.1]{kisin08} is just the assertion that $D^\square_{\rhobar} \to D_{\rhobar}$ is formally smooth, which is clear since $G$ is smooth.  Then the argument of \cite[Proposition 3.1.6]{kisin08} shows that if $U_x$ denotes the complement of the support of $H^2(D_{A_x})$ in $\Spec A_x$, then $U_x$ is formally smooth and open dense in $\Spec A_x$: here is where we crucially invoke our main result, Theorem \ref{thm:unobstructed}. 

Now let $U$ denote the complement of the support of $H^2(D_A)$.  As $U_x$ is the base change of $U$ to $A_x$, the density of $U_x$ in $A_x$ for all $x$ implies that $U$ is dense in $A$.  Furthermore, for any closed point $x \in U$, we know that  $U_x = A_x$ is formally smooth, hence regular.  Since $A$ is noetherian, this implies that the localization of $A$ at any such $x$ is regular, and hence that $U$ is regular.  

Now let $x$ be any closed point of $U \subset \Spec A$.  We know the completion $A_x$ is a regular local ring, and we will be done once we compute the $E_x$-dimension of its tangent space $\Hom_{E_x}(\m_x/\m_x^2, E_x)$. By a standard deformation theory argument (see \cite[Proposition 2.3.5]{kisin09}),
 $A_x$ pro-represents the functor of lifts of $\rho_x$ to artin local $E_x$-algebras with residue field $E_x$, and this tangent space is then isomorphic to the space of continuous 1-cocycles $Z^1(\Gamma_K, \ad(\rho_x))$, and therefore has dimension
\[
\dim A_x= \dim_{E_x} H^1(\Gamma_K, \ad(\rho_x))+ \dim(G)- \dim_{E_x}(\ad(\rho_x)^{\Gamma_K}).
\]
Now note that from the construction of $G$-Weil-Deligne representations from Galois representations, we obviously have that $H^0(D_x)= (\ad(\rho_x))^{\Gamma_K}$. Moreover, $H^1(\Gamma_K, \ad(\rho_x))$ classifies $E_x[\epsilon]$-deformations of $\rho_x$, and (by Proposition \ref{WDdef}), $H^1(D_x)$ classifies equivalence classes of lifts of $D_x$ to $WD_{G, L}(E_x[\epsilon])$. Since $D_x$ arises from the Galois representation $\rho_x$, it is a \textit{bounded} $G$-Weil-Deligne representation (i.e., in any finite-dimensional representation of $G$, the eigenvalues of $\Phi$ are $p$-adic units). This implies that for any lift of $D_x$ to $WD_{G, L}(E_x[\epsilon])$, the usual formula associating a Galois representation to a $G$-Weil-Deligne representation applies to yield a continuous lift of $\rho_x$ to $G(E_x[\epsilon])$. Conversely, such a lift can by the usual argument of the monodromy theorem be converted into a lift of $D_x$. These two procedures are inverses, identifying equivalence classes of $E_x[\epsilon]$-deformations, and they therefore identify the $E_x$-vector spaces $H^1(D_x)$ and $H^1(\Gamma_K, \ad(\rho_x))$. Now combining the local Euler-characteristic formula and the fact that the Euler characteristic of the complex $C^{\bullet}(D_x)$ obviously vanishes, we see that
\[
\dim A_x= \dim(G)+ \dim_{E_x}(H^2(D_x))= \dim(G),
\]
since $H^2(D_x)=0$ for $x \in U$.
\end{proof}

\begin{remark}
We note as a consequence of the proof that for all $x \in U$, the Galois cohomology group $H^2(\Gamma_K, \ad(\rho_x))$ in fact vanishes.
\end{remark}

\begin{remark} \label{rmk:formalsmoothness}
The results of \cite[Theorem 3.3.4]{kisin08}, and later papers like \cite[Theorem 2.1.6]{gee11} that adapt its arguments, state that $U$ is formally smooth over $\QQ_p$, not just regular.  This distinction is irrelevant for the applications to computing the dimension of the generic fiber, as all that is used is regularity, but regularity does not imply formal smoothness in this setting. For example, consider $A = \ZZ_p[[t]][ \frac{1}{p}] $.  As $\ZZ_p[[t]]$ is regular, so is its localization $A$. 
However, $A$ is not formally smooth over $\QQ_p$ in the discrete topology.
We thank Bhargav Bhatt for suggesting the following argument.  

Let $B= \ZZ_p[t][\frac{1}{p}]$.  The natural map $B \to A$ is regular: it is flat because $\ZZ_p[[t]]$ is flat over $\ZZ_p[t]$, and the fibers are points and hence geometrically regular.  Then the Jacobi-Zariski exact sequence for the maps $\QQ_p \to B \to A$ gives
\[
 0\to A \tensor{B} \Omega_{B/\QQ_p} \to \Omega_{A/\QQ_p} \to \Omega_{A/B}  \to 0. 
\]
Now $A \tensor{B} \Omega_{B/\QQ_p}$ is a free $A$-module with basis $dt$.
The above sequence is split by the element of $\Hom_A(\Omega_{A/\QQ_p},A \tensor{B} \Omega_{B/\QQ_p}) = \on{Der}_{\QQ_p}(A,A \tensor{B} \Omega_{B/\QQ_p})$ given by sending $f(t) \in A $ to $f'(t)dt$.  If $A$ were formally smooth over $\QQ_p$ in the discrete topology, $\Omega_{A/\QQ_p}$ would be projective over $A$.  Together with the splitting, this would imply that $\Omega_{A/B}$ embeds into a free $A$-module.  Now $\Omega_{A/\QQ_p}$  is also non-zero, as a localization is $\Omega_{\QQ_p((t))/ \QQ_p(t)}$ whose dimension over $\QQ_p(t)$ is the transcendence degree of the non-algebraic extension $\QQ_p((t))$ of $\QQ_p(t)$. 
But $\Omega_{A/B}$ is also $t$-divisible since $B/t \to A/t$ is an isomorphism, so it cannot embed into a free $A$-module.  This contradiction shows that $A$ cannot be formally smooth over $\QQ_p$ in the discrete topology.
\end{remark}

\providecommand{\bysame}{\leavevmode\hbox to3em{\hrulefill}\thinspace}
\providecommand{\MR}{\relax\ifhmode\unskip\space\fi MR }
\providecommand{\MRhref}[2]{%
  \href{http://www.ams.org/mathscinet-getitem?mr=#1}{#2}
}
\providecommand{\href}[2]{#2}

\end{document}